\numberwithin{equation}{section}
\title{\bf Decentralized Strategies for Backward Linear-Quadratic Mean Field Games and Teams \thanks{This work is supported by National Key R\&D Program of China (2022YFA1006104), National Natural Science Foundations of China (12471419, 12271304), and Shandong Provincial Natural Science Foundations (ZR2024ZD35, ZR2022JQ01).}}
\author{\normalsize  Yu Si\thanks{\it School of Mathematics, Shandong University, Jinan 250100, P.R. China, E-mail: 202112003@mail.sdu.edu.cn} , Jingtao Shi\thanks{\it Corresponding author. School of Mathematics, Shandong University, Jinan 250100, P.R. China, E-mail: shijingtao@sdu.edu.cn}}
\date{}
\newtheorem{Proposition}{Proposition}[section]
\newtheorem{Theorem}{Theorem}[section]
\newtheorem{Lemma}{Lemma}[section]
\newtheorem{Remark}{Remark}[section]
\newtheorem{Assumption}{Assumption}[section]
\newtheorem{Problem}{Problem}[section]
\begin{document}

\maketitle

\noindent{\bf Abstract:}\quad This paper studies a new class of linear-quadratic mean field games and teams problem, where the large-population system satisfies a class of $N$ weakly coupled linear backward stochastic differential equations (BSDEs), and $z_i$ (a part of solution of BSDE) enter the state equations and cost functionals. By virtue of stochastic maximum principle and optimal filter technique, we obtain a Hamiltonian system first, which is a fully coupled forward-backward stochastic differential equation (FBSDE). Decoupling the Hamiltonian system, we derive a feedback form optimal strategy by introducing Riccati equations, stochastic differential equation (SDE) and BSDE. Finally, we provide a numerical example to simulate our results.

\vspace{2mm}

\noindent{\bf Keywords:}\quad Mean field games, mean field team, backward stochastic differential equation, finite population, optimal filtering

\vspace{2mm}

\noindent{\bf Mathematics Subject Classification:}\quad 93E20, 60H10, 49K45, 49N70, 91A23

\section{Introduction}

Recently, the study of dynamic optimization in stochastic large-population systems has garnered significant attention. Distinguishing it from a standalone system, a large-population system comprises numerous agents, widely applied in fields such as engineering, finance and social science. In this context, the impact of a single agent is minimal and negligible, whereas the collective behaviors of the entire population are significant. All the agents are weakly coupled via the state average or empirical distribution in dynamics and cost functionals. Consequently, centralized strategies for a given agent, relying on information from all peers, are impractical. Instead, an effective strategy is to investigate the associated {\it mean field games} (MFGs) to identify an approximate equilibrium by analyzing its limiting behavior. Along this research direction, we can obtain the decentralized strategies through the limiting auxiliary control problems and the related {\it consistency condition} (CC) system. The past developments have largely followed two routes. One route starts by formally solving an $N$-agents game to obtain a large coupled solution equation system. The next step is to derive a limit for the solution by taking $N \rightarrow \infty$ \cite{Lasry-Lions-2007}, which is called the direct (or bottom-up) approach. The interested readers can refer to \cite{Cong-Shi-2024, Huang-Zhou-2020, Wang-2024}. Another route is to solve an optimal control problem of a single agent by replacing the state average term with a limiting process and formalize a fixed point problem to determine the limiting process, and this is called the fixed point (or top-down) approach. This kind of method is also called {\it Nash Certainty Equivalence} (NCE) \cite{Huang-Caines-Malhame-2007, Huang-Malhame-Caines-2006}. The interested readers can refer to \cite{Bardi-Priuli-2014, Bensoussan-Feng-Huang-2021, Huang-Huang-2017, Hu-Huang-Li-2018, Hu-Huang-Nie-2018, Huang-Wang-2016, Moon-Basar-2017, Nguyen-Huang-2012}. Further analysis of MFGs and related topics can be seen in \cite{Bensoussan-Frehse-Yam-2013, Buckdahn-Djehiche-Li-Peng-2009, Carmona-Delarue-2013, Huang-2010, Li-Sun-Xiong-2019} and the references therein. 

Different from noncooperative games, social optima, which is also called team decision problem, belong to the branch of the cooperative games. The participates seek optimal decision to minimize a common social cost functional together. Huang et al. \cite{Huang-Caines-Malhame-2012} firstly studied a class of linear-quadratic-Gaussian control problems with $N$ decision makers, where the basic objective is to minimize a social cost as the sum of $N$ individual costs containing mean field coupling. Feng et al. \cite{Feng-Huang-Wang-2021} studied a class of backward linear-quadratic Gaussian social optimization problems. Recent research on social team decision problems, see \cite{Arabneydi-Mahajan-2014, Feng-Lin-2024, Feng-Wang-2024, Wang-Zhang-2017, Wang-Zhang-Zhang-2020, Wang-Zhang-Zhang-2022}.

A BSDE is an SDE with a given random terminal value. As a consequence, the solution to BSDE should consist of one adapted pair $(y(\cdot),z(\cdot))$. Here, the second component $z(\cdot)$ is introduced to ensure the adaptiveness of $y(\cdot)$. The linear BSDE was firstly introduced by \cite{Bismut-1978}. Then, \cite{Pardoux-Peng-1990} generalized it to the nonlinear case. In fact, mean field problems driven by backward systems can be used to solve economic models with recursive utilities and cooperative relations (\cite{Du-Huang-Wu-2018, Feng-Huang-Wang-2021,  Huang-Wang-Wu-2016, Huang-Wang-Wang-Wang-2023, Li-Wu-2023}). One example is the production planning problem for a given minimum terminal, where the goal is to maximize the sum of product revenue. Another example is the hedging model of pension funds. In this case, we usually consider many types of pension funds and minimize the sum of the model risks. 

In this paper, we consider a class of linear-quadratic mean field games and team problem with backward stochastic large-population system. Inspired by research \cite{Wang-Zhang-Fu-Liang-2023} on finite populations, we utilize the backward separation method that arises in partial information problems to solve backward mean field problems. Compared with the existing literature, the contributions of this paper are listed as follows. 
\begin{itemize}
	\item The linear-quadratic mean field games and teams problems are introduced to a more general class of weakly-coupled backward stochastic system. In previous literature on MFGs with backward dynamics, only the second variable preceding the common noise could enter the generator of the state equation, such as \cite{Huang-Wang-Wang-Wang-2023}. However, in this paper, we have achieved the inclusion of the second variable $z_i$ preceding the private noise into the generator of the state equation. 
	\item Different from the fixed-point approach and direct approach commonly used to address large population problems, we adopts the backward separation approach (\cite{Wang-Wu-Xiong-2018}) to solve the problem. Firstly, we apply stochastic maximum principle and optimal filter technique to solve a multi-agent problem to get the decentralized strategy directly. And then, we introduce some Riccati equations, an SDE and a BSDE to obtain linear feedback form of decentralized strategies. 
	\item By utilizing the approach of backward separation, we can address not only traditional large-population problem but also problems involving finite or moderate-sized populations. Therefore, its application prospects are even broader.
	\item We give numerical simulations of the optimal state and optimal decentralized strategy to demonstrate the feasibility of our theoretical results.
\end{itemize}

The rest of this paper is organized as follows. In Section 2, we formulate our problem. In Section 3, we design the decentralized Nash equilibrium strategy of game problem. In Section 4, we design the decentralized social optimal strategy of social optima problem. In Section 5, we give a numerical example of theoretical results. Finally, the conclusion is given in Section 6.

\section{Problem formulation}

Firstly, we introduce some notations that will be used throughout the paper. We consider a finite time interval $[0, T]$ for a fixed $T > 0$.
Let $\big(\Omega, \mathcal{F}, \left\{\mathcal{F}_t\right\}_{t\geq0}, \mathbb{P}\big)$ be a complete filtered probability space, on which a standard $N$-dimensional Brownian motion $\big\{W_k(s), 1 \leqslant k \leqslant N\big\}_{0 \leqslant s \leqslant t}$ is defined, and  $\{\mathcal{F}_t\}$ is defined as the complete information of the system at time $t$. Then, for any $t \leqslant 0$, we have
$$
\mathcal{F}_t:=\sigma\left\{W_k(s), 1 \leqslant k \leqslant N, 0 \leqslant s \leqslant t\right\}\text{ and } \mathcal{F}_t^k:=\sigma\left(W_k(s), 0 \leqslant s \leqslant t \right),\ k=1,\cdots,N.
$$

Let $\mathbb{R}^n$ be an $n$-dimensional Euclidean space with norm and inner product being defined as $|\cdot|$ and $\langle\cdot, \cdot\rangle$, respectively.
Next, we introduce three spaces. A bounded, measurable function $f(\cdot):[0, T] \rightarrow \mathbb{R}^n$ is denoted as $f(\cdot) \in L^{\infty}(0, T; \mathbb{R}^n)$. An $\mathbb{R}^n$-valued, $\mathcal{F}_t$-adapted stochastic process $f(\cdot): \Omega \times [0, T] \rightarrow \mathbb{R}^n$ satisfying $\mathbb{E} \int_0^T |f(t)|^2 dt < \infty$ is denoted as $f(\cdot) \in L_{\mathcal{F}}^2(0, T; \mathbb{R}^n)$. Similarly, an $\mathbb{R}^n$-valued, $\mathcal{F}_{T}$-measurable random variable $\xi$ with $\mathbb{E} \xi^2 < \infty$ is denoted as $\xi \in L_{\mathcal{F}_T}^2(\Omega, \mathbb{R}^n)$.

For any random variable or stochastic process $X$ and filtration $\mathcal{H}$, $\mathbb{E}X$ represent the mathematical expectation of $X$. For a given vector or matrix \(M\), let \(M^{\top}\) represent its transpose. We denote the set of symmetric \(n \times n\) matrices (resp. positive semi-definite matrices) with real elements by \(\mathcal{S}^n\) (resp. \(\mathcal{S}_{+}^n\)). If \(M \in \mathcal{S}^n\) is positive (semi) definite, we abbreviate it as \(M > (\geqslant) 0\). For a positive constant \(k\), if \(M \in \mathcal{S}^n\) and \(M > kI\), we label it as \(M \gg 0\).

Now, let us focus on a large-population system comprised of $N$ individual agents, denoted as $\left\{\mathcal{A}_k\right\}_{1 \leqslant k \leqslant N}$. The state $x_k(\cdot)\in \mathbb{R}^n$ of agent $\mathcal{A}_k$ is given by the following linear $\mathrm{BSDE}$
\begin{equation}\label{state}
	\left\{\begin{aligned}
		d x_k(t)&=\Big[A(t) x_k(t)+B(t) u_k(t)+C(t) z_k(t)+f(t)\Big] d t+z_k(t) d W_k(t), \\
		x_k(T)&=\xi_k,
	\end{aligned}\right.
\end{equation}
where $u_k(\cdot) \in \mathbb{R}^r$ is the control process of agent $\mathcal{A}_k$, and $\xi_k \in L_{\mathcal{F}^i_T}^2(\Omega, \mathbb{R}^n)$ represents the terminal state, the coefficients $A(\cdot)$, $B(\cdot)$, $C(\cdot)$, $f(\cdot)$ are deterministic functions with compatible dimensions. Noting that $\{z_{k}(\cdot),1 \leqslant k \leqslant N\}$ are also the solution of (\ref{state}), which are introduced to ensure the adaptability of $x_k(\cdot)$.

For simplicity, let $u(\cdot)\equiv(u_1(\cdot),\cdots,u_N(\cdot))$ be the set of controls of all agents and let $u_{-k}(\cdot)\equiv(u_1(\cdot),\cdots,u_{k-1}(\cdot),u_{k+1}(\cdot),\cdots,u_N(\cdot))$ be the set of controls except for agent $\mathcal{A}_k$. The cost functional of agent $\mathcal{A}_k$ is given by
\begin{equation}\label{cost}
\begin{aligned}
	\mathcal{J}_k\left(u_k(\cdot);u_{-k}(\cdot)\right)&=\frac{1}{2} \mathbb{E}\left[\int_0^T\left\|x_k(t)-\Gamma_1(t) x^{(N)}(t)-\eta_1(t)\right\|_Q^2+\left\|u_k(t)\right\|_R^2+\left\|z_k \left(t\right)\right\|_H^2 d t\right] \\
	&\quad +\frac{1}{2} \mathbb{E}\left[\left\|x_k(0)-\Gamma_0 x^{(N)}(0)-\eta_0\right\|_G^2\right],
\end{aligned}
\end{equation}
where $Q(\cdot),R(\cdot),H(\cdot),\Gamma_1(\cdot)$, $\eta_1(\cdot)$ are deterministic functions and $\Gamma_0,\eta_0,G$ are vectors, with compatible dimensions. Define the admissible control set of agent $\mathcal{A}_k$ as 
$$
\mathcal{U}^{ad}_k:= \bigg\{u_k(\cdot) \mid u_k(t) \text { is adapted to } \mathcal{F}^k_t \text { and }
\mathbb{E} \int_0^T\left|u_k(t)\right|^2 d t<\infty\bigg\},\ k=1,\cdots,N.
$$

In this paper, we mainly study the following two problems:
\begin{Problem}(\textbf{PG})\label{game problem}
	Seek a Nash equilibrium strategy $u^*(\cdot)\equiv(u^*_1(\cdot),\ldots,u^*_N(\cdot))$, $u^*_k(\cdot) \in \mathcal{U}^{ad}_k$ for the system (\ref{state})-(\ref{cost}), i.e.,  $\mathcal{J}_k(u^*_k(\cdot);u^*_{-k}(\cdot)) = \inf_{u_k(\cdot) \in\ \mathcal{U}^{ad}_k} \mathcal{J}_k(u_k(\cdot);u^*_{-k}(\cdot))$, $k=1,\cdots,N$.
\end{Problem}

\begin{Problem}(\textbf{PS})\label{social problem}
	Seek a social optimal strategy $u^*(\cdot)\equiv(u^*_1(\cdot),\ldots,u^*_N(\cdot))$, $u^*_k(\cdot) \in \mathcal{U}^{ad}_k$ for the system (\ref{state})-(\ref{cost}), i.e., $\mathcal{J}_{soc}(u^*(\cdot)) =  \inf_{u_k(\cdot) \in\ \mathcal{U}^{ad}_k,\ k=1,\cdots,N} \mathcal{J}_{soc}(u(\cdot))$, where $\mathcal{J}_{soc}(u(\cdot)):=\sum_{k=1}^N\mathcal{J}_{k}(u(\cdot))\equiv\sum_{k=1}^N\mathcal{J}_{k}(u_k(\cdot);u_{-k}(\cdot))$.
\end{Problem}

We make the assumption on terminal states of agents.
\begin{Assumption}\label{A1}
	The terminal conditions $\left\{\xi_k \in L_{\mathcal{F}^k_T}^2\left(\Omega ; \mathbb{R}^n\right), k=1,2, \cdots, N\right\}$ are identically distributed and mutually independent. There exists a constant $c_0$ (independent of $N$) such that $\max _{1 \leqslant k \leqslant N} \mathbb{E}\left|\xi_k\right|^2<c_0$.
\end{Assumption}
In the following context, the time variable $t$ may be suppressed when no confusion occurs.

\section{Mean field game problem}

In this section, we study the linear-quadratic mean field game problem (\textbf{PG}). First, we introduce the following assumption:
\begin{Assumption}\label{A2 of game}
	The coefficients satisfy the following conditions:
	
	(i) $A(\cdot),C(\cdot),\Gamma_1(\cdot) \in L^\infty\left(0, T ; \mathbb{R}^{n \times n}\right)$, and $B(\cdot) \in L^\infty(0, T$; $\left.\mathbb{R}^{n \times r}\right) ;$
	
	(ii) $Q(\cdot),H(\cdot) \in L^\infty\left(0, T ; \mathbb{S}^n\right)$, $R(\cdot) \in L^{\infty}\left(0, T ; \mathbb{S}^r\right)$, and $R(\cdot)>0$, $Q(\cdot) \geqslant 0$, $H(\cdot) \geqslant 0 ;$
	
	(iii) $f(\cdot),\eta_1(\cdot) \in L^2\left(0, T ; \mathbb{R}^n\right) ;$
	
	(iv) $\Gamma_0 \in \mathbb{R}^{n \times n}$, $\eta_0 \in \mathbb{R}^{n}$, $G \in \mathbb{S}^n$ are bounded and $G \geqslant 0$.
\end{Assumption}

We next obtain the necessary and sufficient conditions for the existence of Nash equilibrium strategy of Problem (\ref{game problem}) by using variational analysis.
\begin{Theorem}
	Assume (\ref{A1}) and (\ref{A2 of game}) hold. Then Problem (\ref{game problem}) has a Nash equilibrium strategy $u^*(\cdot)\equiv(u^*_1(\cdot),\cdots,u^*_N(\cdot))$, $u^*_k(\cdot) \in \mathcal{U}^{ad}_k$ if and only if the following Hamiltonian system admits a set of solutions $(x^*_k(\cdot),z^*_k(\cdot),p^*_k(\cdot) :$ 
	\begin{equation}\label{Hamiltonian system of game}
		\left\{\begin{aligned}
			d x^*_k&=\left(A x^*_k+B u^*_k+C z^*_k+f\right) d t+z^*_k d W_k, \\
			d p^*_k&=-\left[A^\top p^*_k+\left(I_N-\frac{\Gamma_1}{N}\right)^\top Q\left(x^*_k-\Gamma_1 x^{*(N)}-\eta_1\right)\right] d t-\left(C^\top p^*_k+H z^*_k\right) d W_k, \\
			x^*_k(T)&=\xi_k,\quad p^*_k(0)=-\left(I_N-\frac{\Gamma_0}{N}\right)^\top G\left(x^*_k(0)-\Gamma_0 x^{*(N)}(0)-\eta_0\right),
		\end{aligned}\right.
	\end{equation}
	and the optimal strategy $u^*_k(\cdot)$ of agent $\mathcal{A}_k$ satisfies the
	stationary condition
	\begin{equation}\label{optimality conditions of game}
		u^*_k=-R^{-1}B^\top  \mathbb{E}\left[p^*_k \mid \mathcal{F}_t^k\right],\ k=1,\cdots,N.
	\end{equation}
\end{Theorem}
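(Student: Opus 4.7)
My plan is to perform a standard convex variational analysis for agent $\mathcal A_k$'s problem with the other controls fixed at $u^*_{-k}(\cdot)$; the nonstandard features are that the state equation is a BSDE and the admissible controls are only $\mathcal{F}^k_t$-adapted, so a filtering/projection step is needed at the end. First I would perturb $u^*_k \to u^*_k + \varepsilon \delta u_k$ for $\delta u_k \in \mathcal U_k^{ad}$ and, using linearity of (\ref{state}), derive the variational BSDE
\begin{equation*}
d\delta x_k = [A \delta x_k + B \delta u_k + C \delta z_k]\, dt + \delta z_k\, dW_k, \qquad \delta x_k(T) = 0,
\end{equation*}
noting that $\delta x_j \equiv 0$ for $j \neq k$ so that $\delta x^{(N)} = \tfrac{1}{N}\delta x_k$. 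Expanding (\ref{cost}) to first order produces a Gateaux derivative whose integrand pairs $\delta x_k$ with $(I - \Gamma_1/N)^\top Q(x_k^* - \Gamma_1 x^{*(N)} - \eta_1)$, $\delta z_k$ with $H z_k^*$, $\delta u_k$ with $R u_k^*$, and whose boundary term pairs $\delta x_k(0)$ with $(I - \Gamma_0/N)^\top G(x_k^*(0) - \Gamma_0 x^{*(N)}(0) - \eta_0)$.

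Second, I would introduce a forward adjoint SDE $dp_k^* = \alpha_k \,dt + \beta_k\, dW_k$ and apply It\^o's formula to $\langle p_k^*, \delta x_k\rangle$ on $[0,T]$. Because $\delta x_k(T) = 0$, taking expectation yields
\begin{equation*}
-\mathbb{E}\langle p_k^*(0), \delta x_k(0)\rangle = \mathbb{E}\int_0^T \big[\langle \alpha_k + A^\top p_k^*, \delta x_k\rangle + \langle B^\top p_k^*, \delta u_k\rangle + \langle C^\top p_k^* + \beta_k, \delta z_k\rangle\big]\, dt.
\end{equation*}
Choosing $\alpha_k$ and $\beta_k$ exactly to cancel the $\delta x_k$- and $\delta z_k$-coefficients of the cost variation forces the BSDE-style dynamics displayed in (\ref{Hamiltonian system of game}), and choosing $p_k^*(0)$ to cancel the surviving boundary term yields the stated initial condition. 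With these identifications the first variation collapses to
\begin{equation*}
\frac{d}{d\varepsilon}\mathcal{J}_k(u_k^* + \varepsilon \delta u_k; u_{-k}^*)\Big|_{\varepsilon=0} = \mathbb{E}\int_0^T \langle B^\top p_k^* + R u_k^*, \delta u_k\rangle\, dt.
\end{equation*}

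Third, since $\delta u_k$ must be $\mathcal{F}^k_t$-adapted while $p_k^*$ depends on $x^{*(N)}$ and hence on the full filtration $\mathcal{F}_t$, the tower property rewrites the above as $\mathbb{E}\int_0^T \langle B^\top \mathbb{E}[p_k^*\mid\mathcal{F}^k_t] + R u_k^*, \delta u_k\rangle\, dt$, and arbitrariness of $\delta u_k$ in $L^2_{\mathcal{F}^k}(0,T;\mathbb R^r)$ gives (\ref{optimality conditions of game}). For sufficiency, convexity of (\ref{cost}) in $(x_k, z_k, u_k)$ (which follows from $Q, H \geq 0$, $G \geq 0$ and $R > 0$ in Assumption~\ref{A2 of game}) together with the affine dependence of $(x_k, z_k)$ on $u_k$ makes $u_k \mapsto \mathcal{J}_k(u_k; u_{-k}^*)$ convex, so vanishing of the first variation is also sufficient for optimality. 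I expect the main subtlety to be the partial-information step: one must verify that replacing $p_k^*$ by its $\mathcal{F}^k_t$-projection is both necessary (since $u_k$ cannot observe other agents' noises) and loses no generality in the orthogonality argument, a point absent from the full-information LQ theory but standard once the backward separation framework of \cite{Wang-Wu-Xiong-2018} is invoked.
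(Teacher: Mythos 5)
Your proposal is correct and follows essentially the same route as the paper: a convex variational (Gateaux derivative) argument, duality via It\^o's formula applied to $\langle p_k^*,\delta x_k\rangle$ with the adjoint dynamics and initial condition chosen to cancel the $\delta x_k$-, $\delta z_k$- and boundary terms, projection onto $\mathcal{F}^k_t$ by the tower property to get $u_k^*=-R^{-1}B^\top\mathbb{E}[p_k^*\mid\mathcal{F}^k_t]$, and convexity ($Q,H,G\geqslant 0$, $R>0$, which is the paper's $X_1\geqslant 0$ term) for sufficiency. The only differences are presentational: you derive the adjoint equation by coefficient matching and state explicitly the facts ($\delta x_j\equiv 0$ for $j\neq k$, hence $\delta x^{(N)}=\delta x_k/N$, and the sufficiency step) that the paper uses implicitly.
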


\begin{proof}
	Suppose $u^*(\cdot)$ is a Nash equilibrium strategy of Problem (\ref{game problem}) and $(x^*_k(\cdot),z^*_{k}(\cdot),k=1,\cdots,N)$ are the corresponding optimal trajectories. For any $u_k(\cdot) \in \mathcal{U}^{ad}_k$ and $\varepsilon>0$, we denote 
	$$
	u_k^\varepsilon(\cdot)=u^*_k(\cdot)+\varepsilon v_k(\cdot) \in \mathcal{U}^{ad}_k,
	$$
	where $v_k(\cdot)=u_k(\cdot)-u^*_k(\cdot)$.
	
	Let $(x^\epsilon_k(\cdot),z^\epsilon_k(\cdot),k=1,\cdots,N)$ be the solution of the following perturbed state equation:
	$$
	\left\{\begin{aligned}
		dx_k^{\varepsilon}=&\left(A x_k^{\varepsilon}+B u_k^{\varepsilon}+C z_k^{\varepsilon}+f\right) d t+z_k^{\varepsilon} d W_k, \\
		x_k^{\varepsilon}(T)&=\xi_k.
	\end{aligned}\right.
	$$
	Let $\Delta x_k(\cdot)=\frac{x_k^\varepsilon(\cdot)-\bar{x}_k(\cdot)}{\varepsilon},  \Delta z_k(\cdot)=\frac{z_{k}^\varepsilon(\cdot)-\bar{z}_k(\cdot)}{\varepsilon}$. It can be verified that $(\Delta x_k(\cdot),\Delta z_k(\cdot),k=1,\cdots,N)$ satisfies
	$$
	\left\{\begin{aligned}
		d \Delta x_k=&\left(A \Delta x_k+B v_k+C \Delta z_k\right) d t+\Delta z_k d W_k, \\
		\Delta x_k(T)&=0.
	\end{aligned}\right.
	$$
	Applying It\^o’s formula to $\left\langle \Delta x_k(\cdot), p^*_k(\cdot)\right\rangle$, we derive
	\begin{equation}\label{Ito maximum of game}
		\begin{aligned}
			&\mathbb{E}\left[0-\left\langle \Delta x_k(0),\left(I_n-\frac{\Gamma_0}{N}\right)^{\top} G\left(x^*_k(0)-\Gamma_0 x^{*(N)}(0)-\eta_0\right)\right\rangle\right]\\
			&=\mathbb{E} \int_0^T\left[\left\langle B v_k, p^*_k\right\rangle-\left\langle \Delta x_k,\left(I_n-\frac{\Gamma_1}{N}\right)^T Q\left(x^*_k-\Gamma_1 x^{*(N)}-\eta_1\right)\right\rangle-\left\langle \Delta z_{k}, H z^*_{k}\right\rangle\right] d t.
		\end{aligned}
	\end{equation}
	Then
	$$
	\begin{aligned}
		& \mathcal{J}_k\left(u_k^\varepsilon(\cdot) ; u^*_{-k}(\cdot)\right)-\mathcal{J}_k\left(u^*_k(\cdot) ; u^*_{-k}(\cdot)\right)=\frac{\varepsilon^2}{2} X_1+\varepsilon X_2,
	\end{aligned}
	$$
	where 
	\begin{equation*}
	\begin{aligned}
		X_1&= \mathbb{E}\left[\int_0^T\left[ \left((I_n-\frac{\Gamma_1}{N})  \Delta x_k\right)^\top Q\left((I_n-\frac{\Gamma_1}{N}) \Delta x_k\right)+ v_k^\top R v_k+ \Delta z_{k}^\top H \Delta z_{k}\right] d t\right. \\
		&\quad \left.\quad+\left((I_n-\frac{\Gamma_0}{N}) \Delta x_k(0)\right)^\top G\left((I_n-\frac{\Gamma_0}{N}) \Delta x_k(0)\right)\right],
	\end{aligned}
	\end{equation*}
	\begin{equation}\label{X_2 of game}
	\begin{aligned}
			X_2&= \mathbb{E} \left[\int_0^T\left[\left(x^*_k-\Gamma_1 x^{*(N)}-\eta_1\right)^\top Q\left((I_n-\frac{\Gamma_1}{N})  \Delta x_k\right)+u_k^{*\top} R v_k+z_{k}^{*\top} H  \Delta z_{k}\right] d t\right. \\
			&\quad \left.\quad+\left(x_k^*(0)-\Gamma_0 x^{*(N)}(0)-\eta_0\right)^\top G\left((I_n-\frac{\Gamma_0}{N})  \Delta x_k(0)\right)\right].
	\end{aligned}
	\end{equation}
	Due to the optimality of $u^*_k(\cdot)$, we have $\mathcal{J}_k\left(u_k^\varepsilon(\cdot) ; u^*_{-k}(\cdot)\right)-\mathcal{J}_k\left(u^*_k(\cdot) ; u^*_{-k}(\cdot)\right) \geqslant 0$. Noticing $X_1 \geqslant 0$ and the arbitrariness of $\epsilon$, we have $X_2=0$. Then, simplifying (\ref{X_2 of game}) with (\ref{Ito maximum of game}), we have
	$$
	X_2=\mathbb{E} \int_0^T\left\langle B^\top p^*_k+R u^*_k, v_k\right\rangle d t.
	$$
	Due to the arbitrariness of $v_k(\cdot)$, we obtain the optimal conditions (\ref{optimality conditions of game})
\end{proof}

Note that optimality conditions (\ref{optimality conditions of game}) are an open-loop Nash equilibrium strategy. The next step is to obtain proper form for the feedback representation of optimality conditions. We first introduce a lemma.

\begin{Lemma}\label{Lemma1}
	For any $j \neq k$, the following holds:
	\begin{equation}\label{lemma1 of game}
        \mathbb{E}\left[x^*_j(t) \mid \mathcal{F}_t^k\right]=\mathbb{E}\left[x^*_j(t)\right]=\mathbb{E}\left[x^*_k(t)\right] .
    \end{equation}
\end{Lemma}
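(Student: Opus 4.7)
The plan is to reduce both equalities to a single underlying structural fact: under the equilibrium, the optimal state process $x^*_k$ is in fact adapted to the individual filtration $\{\mathcal{F}_t^k\}$, despite the apparent coupling through the empirical average $x^{*(N)}$ in the Hamiltonian system (\ref{Hamiltonian system of game}).

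First I would exploit the stationarity condition (\ref{optimality conditions of game}) together with admissibility to conclude that $u^*_k(\cdot)$ is $\mathcal{F}_t^k$-adapted, since it is defined as a conditional expectation onto that filtration. Substituting this back into the forward BSDE for $x^*_k$, the driver depends only on the deterministic $A,B,C,f$, the $\mathcal{F}_t^k$-adapted process $u^*_k$, and $(x^*_k,z^*_k)$ itself, the martingale integrand is against $W_k$ only, and the terminal $\xi_k$ is $\mathcal{F}_T^k$-measurable. Classical well-posedness of linear BSDEs (Pardoux--Peng) then yields a unique $\mathcal{F}_t^k$-adapted solution $(\tilde x^*_k,\tilde z^*_k)$; by uniqueness of the BSDE in the larger filtration $\{\mathcal{F}_t\}$ this must coincide with $(x^*_k,z^*_k)$, so $x^*_k(t)$ is in fact $\mathcal{F}_t^k$-measurable.

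Once this adaptedness is in hand, the first equality of (\ref{lemma1 of game}) is immediate. For $j\neq k$ the process $x^*_j(t)$ is $\mathcal{F}_t^j$-measurable, and since $W_j$ and $W_k$ are independent Brownian motions and $\xi_j$ is $\mathcal{F}_T^j$-measurable, the $\sigma$-algebras $\mathcal{F}_t^j$ and $\mathcal{F}_t^k$ are independent. Hence $x^*_j(t)$ is independent of $\mathcal{F}_t^k$, giving $\mathbb{E}[x^*_j(t)\mid \mathcal{F}_t^k]=\mathbb{E}[x^*_j(t)]$. For the second equality I would invoke the exchangeability of the problem data: $\{\xi_k\}$ are i.i.d.\ by Assumption \ref{A1}, $\{W_k\}$ are i.i.d.\ by construction, and all coefficients appearing in (\ref{state})--(\ref{cost}) are identical across agents, so that any permutation of indices leaves the Hamiltonian system invariant in law. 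Consequently the laws of $x^*_j(t)$ and $x^*_k(t)$ coincide, and in particular their expectations agree.

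The step I expect to require the most care is the adaptedness claim. A priori the backward equation for $p^*_k$ in (\ref{Hamiltonian system of game}) depends on every agent through $x^{*(N)}$, so one might fear that $x^*_k$ inherits a dependence on $\{W_j\}_{j\neq k}$ as well. The resolution is that this cross-agent coupling enters only through the costate $p^*_k$, and the conditional expectation in (\ref{optimality conditions of game}) projects $u^*_k$ back onto the individual filtration $\{\mathcal{F}_t^k\}$; once that projection is performed, the forward BSDE for $x^*_k$ decouples completely from the other agents and can be solved inside $\{\mathcal{F}_t^k\}$. This is precisely the backward separation idea mentioned in the introduction, applied at the level of the equilibrium itself.
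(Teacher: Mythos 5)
Your proposal is correct and follows essentially the same route as the paper: note that $x^*_k(t)$ is adapted to $\mathcal{F}_t^k$, deduce independence of $x^*_j(t)$ from $\mathcal{F}_t^k$ ($j\neq k$) via the mutually independent Brownian motions and terminal data, and obtain equality of the means from the symmetry of the agents. The only difference is that you substantiate the adaptedness claim through uniqueness of the decoupled linear BSDE solved in the individual filtration, a step the paper simply asserts without proof.
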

\begin{proof}
	Note that $x^*_k(t)$ is adapted to $\mathcal{F}_t^k, k=1, \cdots, N$. Since $W_k(\cdot), k=$ $1, \cdots, N$, are mutually independent, by Assumption (\ref{A1}) then $x^*_k(\cdot)$, $k=1, \cdots, N$, are independent of each other. Since all agents have the same parameters, we obtain (\ref{lemma1 of game}).
\end{proof}
By Lemma \ref{Lemma1}, we have
$$
\mathbb{E}\left[x^{*(N)} \mid \mathcal{F}_t^k\right]=\frac{1}{N} x^*_k+\frac{1}{N} \sum_{j \neq k } \mathbb{E}x^*_j=\frac{1}{N} x^*_k+\frac{N-1}{N}  \mathbb{E}x^*_k.
$$
And then, we introduce the following FBSDE:
\begin{equation}\label{Hamiltonian system of game change}
	\left\{\begin{aligned}
		d x^*_k&=\left(A x^*_k-BR^{-1}B^\top \hat{p}^*_k+C z^*_k+f\right) d t+z^*_k d W_k, \\
		d \hat{p}^*_k&=-\left\{A^\top \hat{p}^*_k+\left(I_N-\frac{\Gamma_1}{N}\right)^\top Q\left[\left(I_N-\frac{\Gamma_1}{N}\right)x^*_k-\frac{N-1}{N}\Gamma_1 \mathbb{E} x^*_k-\eta_1\right]\right\} d t\\
		&\quad -\left(C^\top \hat{p}^*_k+H z^*_k\right) d W_k, \\
		x^*_k(T)&=\xi_k,\quad \hat{p}^*_k(0)=-\left(I_N-\frac{\Gamma_0}{N}\right)^\top G\left[\left(I_N-\frac{\Gamma_0}{N}\right)x^*_k(0)-\frac{N-1}{N}\Gamma_0 \mathbb{E} x^*_k(0)-\eta_0\right],
	\end{aligned}\right.
\end{equation}
where $\hat{p}^*_k(\cdot)=\mathbb{E}\left[p^*_k(\cdot) \mid \mathcal{F}_t^k\right]$. Then, we have the following proposition.

\begin{Proposition}\label{FBSDE equivalent theorem}
	The FBSDE (\ref{Hamiltonian system of game}) admits a set of adapted solutions if and only if FBSDE (\ref{Hamiltonian system of game change}) admits a set of adapted solutions.
\end{Proposition}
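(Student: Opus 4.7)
The plan is to establish the equivalence by viewing $\hat{p}^*_k = \mathbb{E}[p^*_k \mid \mathcal{F}_t^k]$ as the optimal filter of the adjoint $p^*_k$ adapted to agent $k$'s private observation $W_k$. Because the state dynamics in (\ref{state}) are driven only by $W_k$ with deterministic coefficients, and since Assumption \ref{A1} makes the terminal values independent, the BSDE solution $(x^*_k, z^*_k)$ is $\mathcal{F}_t^k$-adapted and Lemma \ref{Lemma1} applies. Moreover the BSDE forces $x^*_k(0)$ to be deterministic, so a short algebraic check using $\mathbb{E}x^*_k(0) = x^*_k(0)$ shows that the initial conditions for $p^*_k$ in (\ref{Hamiltonian system of game}) and for $\hat{p}^*_k$ in (\ref{Hamiltonian system of game change}) agree; the task therefore reduces to matching the two differential equations.

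For the ``only if'' direction, I would assume $(x^*_k, z^*_k, p^*_k)$ solves (\ref{Hamiltonian system of game}), set $\hat{p}^*_k := \mathbb{E}[p^*_k \mid \mathcal{F}_t^k]$, and take the conditional expectation of the adjoint equation, which is a forward linear SDE driven by $W_k$ with $\mathcal{F}_t$-adapted coefficients. Because $W_k$ remains a Brownian motion under $\mathcal{F}_t^k$, the standard filtering identity (conditional expectation commuting with the It\^o integral against an observed Brownian motion) combined with Fubini gives
\begin{equation*}
d\hat{p}^*_k = -\bigl[A^\top \hat{p}^*_k + (I_N - \Gamma_1/N)^\top Q\, \mathbb{E}[x^*_k - \Gamma_1 x^{*(N)} - \eta_1 \mid \mathcal{F}_t^k]\bigr] dt - (C^\top \hat{p}^*_k + Hz^*_k) dW_k.
\end{equation*}
Since $x^*_k, z^*_k$ are $\mathcal{F}_t^k$-adapted and Lemma \ref{Lemma1} yields $\mathbb{E}[x^{*(N)} \mid \mathcal{F}_t^k] = \frac{1}{N} x^*_k + \frac{N-1}{N}\mathbb{E}x^*_k$, the drift collapses to the one appearing in (\ref{Hamiltonian system of game change}); substituting $u^*_k = -R^{-1}B^\top \hat{p}^*_k$ from (\ref{optimality conditions of game}) into the $x^*_k$-equation of (\ref{Hamiltonian system of game}) recovers its forward part.

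For the converse, I would take $(x^*_k, z^*_k, \hat{p}^*_k)$ satisfying (\ref{Hamiltonian system of game change}) and define $p^*_k$ as the unique $\mathcal{F}_t$-adapted solution of the forward linear SDE in (\ref{Hamiltonian system of game}) with the prescribed initial condition; existence and uniqueness are standard since $(x^*_j)_{j=1}^N$ and $z^*_k$ are already known. Setting $\tilde{p}_k := \mathbb{E}[p^*_k \mid \mathcal{F}_t^k]$ and repeating the filtering computation shows that $\tilde{p}_k$ satisfies exactly the $\hat{p}^*_k$-equation in (\ref{Hamiltonian system of game change}); by uniqueness of that linear equation $\tilde{p}_k \equiv \hat{p}^*_k$, so (\ref{Hamiltonian system of game}) together with (\ref{optimality conditions of game}) is reproduced.

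The delicate point underpinning both directions is the commutation identity
\begin{equation*}
\mathbb{E}\Bigl[\int_0^t (C^\top p^*_k + Hz^*_k)\, dW_k \,\Big|\, \mathcal{F}_t^k\Bigr] = \int_0^t (C^\top \hat{p}^*_k + Hz^*_k)\, dW_k,
\end{equation*}
which is precisely the ``observed integrator'' filtering lemma. It applies here because $W_k$ is $\mathcal{F}_t^k$-adapted and independent of $\{W_j\}_{j \neq k}$, so no innovation correction is needed; everything else is algebraic bookkeeping through Lemma \ref{Lemma1}.
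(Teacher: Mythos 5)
Your proposal is correct and takes essentially the same route as the paper: one direction by taking conditional expectations of the adjoint equation, using Lemma \ref{Lemma1} together with the commutation identity $\mathbb{E}\big[\int_0^t \phi\, dW_k \mid \mathcal{F}_t^k\big]=\int_0^t \mathbb{E}\big[\phi \mid \mathcal{F}_s^k\big]\, dW_k$ (Lemma 5.4 of \cite{Xiong-2008}), and the other by noting that a solution of (\ref{Hamiltonian system of game change}) fixes $u^*_k$ and decouples (\ref{Hamiltonian system of game}). Your additional uniqueness check that the filter of the reconstructed $p^*_k$ coincides with $\hat{p}^*_k$ simply makes explicit a consistency step the paper leaves implicit.
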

\begin{proof}
	If (\ref{Hamiltonian system of game change}) admits an adapted solution, then $(u^*_k(\cdot), k = 1,\ldots, N)$ is given and (\ref{Hamiltonian system of game}) is decoupled. Then (\ref{Hamiltonian system of game}) admits a solution. Conversely, if (\ref{Hamiltonian system of game change}) has a solution, then by taking the conditional expectation of the second equation in equation (\ref{Hamiltonian system of game}) and noting lemma 3.1, we obtain the solution of (\ref{Hamiltonian system of game change}). Here, we have applied lemma 5.4 of \cite{Xiong-2008}:
	$$
	\mathbb{E}\left[\int_0^t p_k^* d W_k \mid \mathcal{F}_t^k\right]=\int_0^t \mathbb{E}\left[p_k^* \mid \mathcal{F}_t^k\right] d W_k.
	$$

\end{proof}

Since the state equation is backward, we divide the decoupling procedure into two steps, inspired by \cite{Lim-Zhou-2001}.
\begin{Proposition}
	Let Assumption (\ref{A1}), (\ref{A2 of game}) hold. Let $(x^*_k(\cdot),z^*_k(\cdot),p^*_k(\cdot),k=1,\cdots,N)$ be the solution of FBSDE (\ref{Hamiltonian system of game change}). Then, we have the following relations:
	\begin{equation}
		\left\{\begin{aligned}
			x^*_k&=\Sigma \left(\hat{p}^*_k-\mathbb{E} p^*_k\right)+K \mathbb{E} p^*_k+\varphi^*_k,\\
			z^*_k&=\left(I_n+\Sigma H\right)^{-1} \left(\beta^*_k-\Sigma C^\top \hat{p}^*_k\right),
		\end{aligned}\right.
	\end{equation}
	and $\Sigma(\cdot),K(\cdot),\varphi^*_k(\cdot)$ are solutions of the following equations, respectively: 
	\begin{equation}\label{Sigma equation}
		\left\{\begin{aligned}
			&\dot{\Sigma}-A \Sigma-\Sigma A^\top-\Sigma\left(I_n-\frac{\Gamma_1}{N}\right)^\top Q \left(I_n-\frac{\Gamma_1}{N}\right) \Sigma+B R^{-1} B^\top\\
			&+C \left(I_n+\Sigma H\right)^{-1} \Sigma C^\top=0, \\
			&\Sigma(T)=0,
		\end{aligned}\right.
	\end{equation}
	\begin{equation}\label{K equation}
		\left\{\begin{aligned}
			& \dot{K}-A K-K A^\top-K\left(I_n-\frac{\Gamma_1}{N}\right) ^\top Q (I_n-\Gamma_1) K+B R^{-1} B^\top\\
			&+C \left(I_n+\Sigma H\right)^{-1} \Sigma C^\top=0, \\
			& K(T)=0,
		\end{aligned}\right.
	\end{equation}
	\begin{equation}\label{varphi equation}
		\left\{\begin{aligned}
			d \varphi^*_k&=\left\{\left[A+\Sigma\left(I_n-\frac{\Gamma_1}{N}\right)^\top Q \left(I_n-\frac{\Gamma_1}{N}\right)\right] \varphi^*_k +\left[K\left(I_n-\frac{\Gamma_1}{N}\right)^\top Q (I_n-\Gamma_1) \right.\right.\\
			&\quad \left.-\Sigma\left(I_n-\frac{\Gamma_1}{N}\right)^\top Q \left(I_n-\frac{\Gamma_1}{N}\right)\right] \mathbb{E}\varphi^*_k+C\left(I_n+\Sigma H\right)^{-1}\beta^*_k\\
			&\quad \left.-K\left(I_n-\frac{\Gamma_1}{N}\right)^{\top} Q \eta_1+f\right\}dt+ \beta^*_k d W_k,\\
			\varphi^*_k(T)&=\xi_k.
		\end{aligned}\right.
	\end{equation}
\end{Proposition}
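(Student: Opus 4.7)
The plan is to verify the ansatz $x_k^* = \Sigma(\hat p_k^* - \mathbb{E}p_k^*) + K \mathbb{E}p_k^* + \varphi_k^*$ by applying It\^o's formula to the right-hand side and matching the drift and diffusion with the $x_k^*$-BSDE in \eqref{Hamiltonian system of game change}. First I would take unconditional expectation of the $\hat p_k^*$ equation in \eqref{Hamiltonian system of game change} to obtain a deterministic ODE for $\mathbb{E}p_k^*$, and subtract it from the $\hat p_k^*$ equation to get the dynamics of $\pi_k^* := \hat p_k^* - \mathbb{E}p_k^*$. A key observation is that the drift of $\pi_k^*$ then contains only the centered state $x_k^* - \mathbb{E}x_k^*$ (multiplied by $(I_n - \Gamma_1/N)^\top Q (I_n - \Gamma_1/N)$), while the drift of $\mathbb{E}p_k^*$ contains only $\mathbb{E}x_k^*$ (multiplied by $(I_n - \Gamma_1/N)^\top Q (I_n - \Gamma_1)$). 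This clean separation is exactly what motivates using two different matrix coefficients $\Sigma$ and $K$ in the ansatz.

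Next, writing $d\varphi_k^* = \alpha_k^* dt + \beta_k^* dW_k$ and differentiating the ansatz, the martingale part becomes $\bigl(-\Sigma(C^\top \hat p_k^* + H z_k^*) + \beta_k^*\bigr) dW_k$. Equating this with $z_k^* dW_k$ from \eqref{Hamiltonian system of game change} yields $(I_n + \Sigma H) z_k^* = \beta_k^* - \Sigma C^\top \hat p_k^*$, which inverts to the claimed formula for $z_k^*$. Invertibility needs a brief side remark: from \eqref{Sigma equation} and standard Riccati arguments one expects $\Sigma(\cdot) \geq 0$, and together with $H \geq 0$ this makes every eigenvalue of $\Sigma H$ nonnegative, so $I_n + \Sigma H$ is invertible on $[0,T]$.

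For the drift, I would substitute $\hat p_k^* = \pi_k^* + \mathbb{E}p_k^*$ together with the ansatz-induced identities
$\mathbb{E}x_k^* = K \mathbb{E}p_k^* + \mathbb{E}\varphi_k^*$ and $x_k^* - \mathbb{E}x_k^* = \Sigma \pi_k^* + \varphi_k^* - \mathbb{E}\varphi_k^*$
into both the drift computed from $d(\Sigma \pi_k^* + K\mathbb{E}p_k^* + \varphi_k^*)$ and the BSDE drift $A x_k^* - B R^{-1} B^\top \hat p_k^* + C z_k^* + f$. Then I would group terms according to the independent blocks $\pi_k^*, \mathbb{E}p_k^*, \varphi_k^*, \mathbb{E}\varphi_k^*$ and the inhomogeneous pieces $\eta_1, f$. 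Matching the coefficient of $\pi_k^*$ gives exactly \eqref{Sigma equation} for $\Sigma$; matching the coefficient of $\mathbb{E}p_k^*$ gives \eqref{K equation} for $K$; the remaining identity determines $\alpha_k^*$ and yields the SDE \eqref{varphi equation} for $\varphi_k^*$. The terminal condition $\varphi_k^*(T) = \xi_k$ is then read off from the ansatz at $t=T$ using $\Sigma(T) = K(T) = 0$ and $x_k^*(T) = \xi_k$.

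The main obstacle will be the bookkeeping in the drift-matching step: the drifts of $\pi_k^*$ and $\mathbb{E}p_k^*$ naturally involve $x_k^*$ and $\mathbb{E}x_k^*$, and these must be re-expressed purely in terms of $(\pi_k^*, \mathbb{E}p_k^*, \varphi_k^*, \mathbb{E}\varphi_k^*)$ via the ansatz \emph{before} coefficients can be compared; otherwise the system does not decouple. A secondary point is well-posedness, namely arguing that \eqref{Sigma equation} and \eqref{K equation} admit classical, bounded, nonnegative solutions on all of $[0,T]$ (following the line of \cite{Lim-Zhou-2001}), so that $I_n + \Sigma H$ remains uniformly invertible and $\varphi_k^*$ is well-defined as the solution of an affine BSDE with bounded coefficients and square-integrable terminal datum $\xi_k$.
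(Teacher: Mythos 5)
Your proposal is correct and follows essentially the same route as the paper: posit the ansatz with $d\varphi^*_k=\alpha^*_k\,dt+\beta^*_k\,dW_k$, split into the centered part $\hat p^*_k-\mathbb{E}p^*_k$ and the mean part $\mathbb{E}p^*_k$ (the paper applies It\^o's formula to the relations for $x^*_k-\mathbb{E}x^*_k$ and $\mathbb{E}x^*_k$ separately, which is the same bookkeeping you describe), match the diffusion terms to get $z^*_k=(I_n+\Sigma H)^{-1}(\beta^*_k-\Sigma C^\top\hat p^*_k)$, and then match drift coefficients of $\hat p^*_k-\mathbb{E}p^*_k$ and $\mathbb{E}p^*_k$ to obtain \eqref{Sigma equation} and \eqref{K equation}, with the residual identities determining $\alpha^*_k$ and hence \eqref{varphi equation}. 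Your added remark on the invertibility of $I_n+\Sigma H$ is a harmless supplement that the paper leaves implicit.
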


\begin{proof}
	Noting the terminal condition and structure of (\ref{Hamiltonian system of game change}), for each $k = 1,\cdots, N$, we suppose
	\begin{equation}\label{decouple form of game 1}
		x^*_k=\Sigma \left(p^*_k-\mathbb{E}p^*_k\right)+K \mathbb{E}p^*_k+\varphi^*_k,
	\end{equation}
	with $\Sigma(T) = 0, K(T) = 0$ for two $\mathbb{R}^{n \times n}$-valued deterministic differentiable functions $\Sigma(\cdot), K(\cdot)$, and for an $\mathbb{R}^n$-valued $\mathcal{F}_t^i$-adapted process $\varphi^*_k(\cdot)$ satisfying a BSDE:
	\begin{equation}
		\left\{\begin{aligned}
			d \varphi^*_k&=\alpha_k^* d t+  \beta^*_k d W_k,\\
			\varphi^*_k(T)&=\xi_k.
		\end{aligned}\right.
	\end{equation}
	Then, we have
	\begin{equation}\label{decouple form of game 2}
	x^*_k-\mathbb{E}x^*_k=\Sigma\left(\hat{p}^*_k-\mathbb{E}p^*_k\right)+\left(\varphi^*_k-\mathbb{E} \varphi^*_k\right), 
	\end{equation}
	\begin{equation}\label{decouple form of game 3}
	\mathbb{E}x^*_k=K \mathbb{E}p^*_k+\mathbb{E} \varphi^*_k.
    \end{equation}
	Applying It\^o’s formula to (\ref{decouple form of game 2}), we have
    $$
    \begin{aligned}
    	 d\left(x^*_k-\mathbb{E}x^*_k\right)&=\dot{\Sigma}\left(\hat{p}^*_k-\mathbb{E}p^*_k\right)dt-\Sigma\left(C^\top \hat{p}^*_k+H z^*_k\right)dW_k+\left(\alpha^*_k-\mathbb{\alpha}^*_k\right)+\beta_k dW_k\\
    	 &\quad -\Sigma \left[A^\top \left(\hat{p}^*_k-\mathbb{E}p^*_k\right)+\left(I_n-\frac{\Gamma_1}{N}\right)^\top Q \left(I_n-\frac{\Gamma_1}{N}\right)\left(x^*_k-\mathbb{E}x^*_k\right)\right] d t \\
    	 &=\left[A\left(x^*_k-\mathbb{E}x^*_k\right)-B R^{-1} B^{\top}\left(\hat{p}^*_k-\mathbb{E}p^*_k\right)+C\left(z^*_k-\mathbb{E}z^*_k\right)\right] d t+z^*_k d W_k.
    \end{aligned}
    $$
	By comparing the coefficients of the diffusion terms, we obtain
	$$
	\beta^*_k-\Sigma C^\top \hat{p}^*_k-\Sigma H z^*_k-z^*_k=0.
	$$
	Then we can solve for $z^*_k(\cdot)$ explicitly:
	\begin{equation}\label{z_k}
		z^*_{k}=\left(I_n+\Sigma H\right)^{-1} \left(\beta^*_{k}-\Sigma C^{\top} \hat{p}^*_{k}\right).
	\end{equation}
	Then by comparing the coefficients of the drift terms and noting (\ref{decouple form of game 2}) and (\ref{z_k}), we obtain 
	\begin{equation}\label{drift term of game}
	\begin{aligned}
		& \left[\dot{\Sigma}-\Sigma A^\top -A \Sigma-\Sigma\left(I_n-\frac{\Gamma_1}{N}\right)^\top Q\left(I_n-\frac{\Gamma_1}{N}\right) \Sigma+B R^{-1} B^\top\right.\\
        &\quad +C\left(I_n+\Sigma H\right)^{-1}\Sigma C^\to p\bigg]\left(\hat{p}^*_k-\mathbb{E}\hat{p}^*_k\right)+ \left(\alpha^*_k-\mathbb{E} \alpha^*_k\right)\\
		& -\left[\Sigma\left(I_n-\frac{\Gamma_1}{N}\right)^\top Q\left(I_n-\frac{\Gamma_1}{N}\right)+A\right]\left(\varphi^*_k-\mathbb{E} \varphi^*_k\right)-C\left(I_n+\Sigma H\right)^{-1}\left(\beta^*_k-\mathbb{E}\beta^*_k\right)=0.
	\end{aligned}
    \end{equation}
	Combining (\ref{drift term of game}), we can obtain the equation (\ref{Sigma equation}) of $\Sigma(\cdot)$  from the coefficients of $\hat{p}^*_k(\cdot)-\mathbb{E}\hat{p}^*_k(\cdot)$ and 
	\begin{equation}\label{drift non-homogeneous term of game 1}
		\begin{aligned}
		& \left(\alpha^*_k-\mathbb{E} \alpha^*_k\right)-\left[\Sigma\left(I_n-\frac{\Gamma_1}{N}\right)^\top Q\left(I_n-\frac{\Gamma_1}{N}\right)+A\right]\left(\varphi^*_k-\mathbb{E} \varphi^*_k\right)\\
        & -C\left(I_n+\Sigma H\right)^{-1}\left(\beta^*_k-\mathbb{E}\beta^*_k\right)=0.
		\end{aligned}
	\end{equation}
	Then, applying It\^o’s formula to (\ref{decouple form of game 3}), we have
	$$
	\begin{aligned}
		d \mathbb{E}x^*_k&= \dot{K} \mathbb{E}p^*_k d t-K\left\{A^\top \mathbb{E}p^*_k+\left(I_n-\frac{\Gamma_1}{N}\right) Q\left[\left(I_n-\Gamma_1\right) \mathbb{E}x^*_k-\eta_1\right]\right\} d t +\mathbb{E} \alpha^*_k d t \\
		&= \left(A \mathbb{E}x^*_k-B R^{-1} B^\top \mathbb{E}p^*_k+C \mathbb{E}z^*_k+f\right) d t.
	\end{aligned}
	$$
	Then noting (\ref{decouple form of game 3}) and (\ref{z_k}), we obtain 
	\begin{equation}\label{E term of game}
	\begin{aligned}
		& \left(\dot{K}-K A^{\top}-A K+B R^{-1} B^\top -K\left(I_n-\frac{\Gamma_1}{N}\right)^\top Q\left(I_n-\Gamma_1\right) K \right.\\
		&\quad +C\left(I_n+\Sigma H\right)^{-1} \Sigma C^\top \bigg) \mathbb{E} p^*_k -K\left(I_n-\frac{\Gamma_1}{N}\right)^\top Q\left(I_n-\Gamma_1\right) \mathbb{E} \varphi^*_k \\
		& +K\left(I_n-\frac{\Gamma_1}{N}\right)^\top Q \eta_1+\mathbb{E} \alpha_k^*-A \mathbb{E} \varphi^*_k -C\left(I_n+\Sigma H\right)^{-1} \mathbb{E} \beta^*_k-f=0.
	\end{aligned}
	\end{equation}
	Combining (\ref{E term of game}), we can obtain the equation (\ref{K equation}) of $K(\cdot)$  from the coefficients of $\mathbb{E}p^*_k(\cdot)$ and 
	\begin{equation}\label{drift non-homogeneous term of game 2}
	\begin{aligned}
		&-K\left(I_n-\frac{\Gamma_1}{N}\right)^\top Q\left(I_n-\Gamma_1\right) \mathbb{E} \varphi^*_k +K\left(I_n-\frac{\Gamma_1}{N}\right)^\top Q \eta_1+\mathbb{E} \alpha_k^*-A \mathbb{E} \varphi^*_k \\
		&-C\left(I_n+\Sigma H\right)^{-1} \mathbb{E} \beta^*_k-f=0.
	\end{aligned}
	\end{equation}
	Combining (\ref{drift non-homogeneous term of game 1}) and (\ref{drift non-homogeneous term of game 2}), we have 
	\begin{equation}\label{drift non-homogeneous term of game 3}
	\begin{aligned}
		\alpha^*_k&= \left[A+\Sigma\left(I_n-\frac{\Gamma_1}{N}\right)^\top Q \left(I_n-\frac{\Gamma_1}{N}\right)\right] \varphi^*_k \\
		&\quad +\left[K\left(I_n-\frac{\Gamma_1}{N}\right)^\top Q (I_n-\Gamma_1) -\Sigma\left(I_n-\frac{\Gamma_1}{N}\right)^\top Q \left(I_n-\frac{\Gamma_1}{N}\right)\right] \mathbb{E}\varphi^*_k\\
        &\quad +C\left(I_n+\Sigma H\right)^{-1}\beta_k-K\left(I_n-\frac{\Gamma_1}{N}\right)^\top Q \eta_1+f,
	\end{aligned}
	\end{equation}
	and the equation (\ref{varphi equation}) of $\varphi^*_i(\cdot)$ can be got by using (\ref{drift non-homogeneous term of game 3}). Then, we completed the proof.
\end{proof}

\begin{Remark}
	Under Assumptions (\ref{A1})–(\ref{A2 of game}), the Riccati equations (\ref{Sigma equation}) and (\ref{K equation}) admit unique solutions respectively \cite{Li-Sun-Xiong-2019}. Once $\Sigma(\cdot)$ and $K(\cdot)$ are known, the existence of solution to (\ref{varphi equation}) is clearly
	established.
\end{Remark}

\begin{Proposition}\label{decouple proposition 2}
	Let Assumption (\ref{A1}), (\ref{A2 of game}) hold. Let $(x^*_k(\cdot),z^*_k(\cdot),p^*_k(\cdot),k=1,\cdots,N)$ be the solution of FBSDE (\ref{Hamiltonian system of game change}). Then, we have the following relations:
	\begin{equation}\label{decouple form of game 4}
		\hat{p}^*_k=\Pi \left(x^*_k-\mathbb{E}x^*_k\right)+M \mathbb{E}x^*_k+\zeta^*_k,
	\end{equation}
	where $\Pi(\cdot),M(\cdot),\zeta^*_k(\cdot)$ are solutions of the following equations, respectively: 
	\begin{equation}\label{Pi equation}
		\left\{\begin{aligned}
			&\dot{\Pi}+\Pi A^\top +A \Pi-\Pi\left[B R^{-1} B^\top +C\left(I_n+\Sigma H\right)^{-1}\Sigma C^\top \right]\Pi\\
			&+\left(I_n-\frac{\Gamma_1}{N}\right)^\top Q\left(I_n-\frac{\Gamma_1}{N}\right)=0, \quad
			\Pi(0)=-\left(I_n-\frac{\Gamma_0}{N}\right)^\top G\left(I_n-\frac{\Gamma_0}{N}\right),
		\end{aligned}\right.
	\end{equation}
	\begin{equation}\label{M equation}
		\left\{\begin{aligned}
			&\dot{M}+M A^\top +A M-M\left[BR^{-1}B^\top +C\left(I_n+\Sigma H\right)^{-1}\Sigma C^\top \right]M \\
			& +\left(I_n-\frac{\Gamma_1}{N}\right)^{\top} Q\left(I_n-\Gamma_1\right)=0,\quad
			M(0)=\left(I_n-\frac{\Gamma_0}{N}\right)^{\top} G \left(I_n-\Gamma_0\right),
		\end{aligned}\right.
	\end{equation}
	\begin{equation}\label{zeta equation}
		\left\{\begin{aligned}
			d \zeta^*_k&=\bigg\{\left\{\Pi\left[BR^{-1}B^\top +C\left(I_n+\Sigma H\right)^{-1}\Sigma C^\top \right]-A^\top \right\} \zeta^*_k-\Pi C\left(I_n+\Sigma H\right)^{-1}\beta^*_k \\
			&\qquad +\left(\Pi-M\right)C\left(I_n+\Sigma H\right)^{-1}\mathbb{E}\beta^*_k-Mf-\left(I_n-\frac{\Gamma_1}{N}\right)^\top Q \eta_1\\
			&\qquad +\left(M-\Pi\right)\left[BR^{-1}B^\top +C\left(I_n+\Sigma H\right)^{-1}\Sigma C^\top \right]\mathbb{E}\zeta^*_k \bigg\}dt\\
			&\quad +\bigg\{-\left(\Pi+H\right)\left(I_n+\Sigma H\right)^{-1}\beta^*_k-\left(I_n-\Pi\Sigma\right)\left(I_n+H\Sigma \right)^{-1}C^\top\\
			&\qquad\ \times\left\{\left(I_n-\Pi\Sigma\right)^{-1}\big[\Sigma\left(\varphi^*_k-\mathbb{E}\varphi^*_k\right)+\left(\zeta^*_k-\mathbb{E}\zeta^*_k\right)\big]\right.\\
            &\qquad\ \left.+\left(I_n-\Pi\Sigma\right)^{-1}\left(M\mathbb{E}\varphi^*_k +\mathbb{E}\zeta^*_k\right)\right\}\bigg\}dW_k,\quad \zeta^*_k(0)=\left(I_n-\frac{\Gamma_0}{N}\right)^\top G \eta_0.
		\end{aligned}\right.
	\end{equation}
\end{Proposition}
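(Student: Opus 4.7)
The plan is to imitate the decoupling strategy used above for $x^*_k$, but in the reverse direction: postulate the linear ansatz (\ref{decouple form of game 4}) for $\hat{p}^*_k$ in terms of $x^*_k$, differentiate both sides via It\^o's formula, and match the drift and diffusion against the backward dynamics of $\hat{p}^*_k$ in (\ref{Hamiltonian system of game change}), invoking the representations of $x^*_k$ and $z^*_k$ from the previous proposition.

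Concretely, I posit $\hat{p}^*_k = \Pi (x^*_k - \mathbb{E}x^*_k) + M \mathbb{E}x^*_k + \zeta^*_k$ with deterministic differentiable $\Pi(\cdot), M(\cdot)$ and an $\mathcal{F}^k_t$-adapted process $\zeta^*_k$ obeying $d\zeta^*_k = \mu^*_k dt + \nu^*_k dW_k$ with $\mu^*_k,\nu^*_k$ to be identified. The initial values $\Pi(0), M(0), \zeta^*_k(0)$ are then forced by splitting the boundary condition $\hat{p}^*_k(0) = -(I_n - \Gamma_0/N)^\top G[(I_n - \Gamma_0/N) x^*_k(0) - \frac{N-1}{N}\Gamma_0 \mathbb{E}x^*_k(0) - \eta_0]$ into its centered and expected parts and matching with the ansatz at $t=0$. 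From the previous proposition I already have $x^*_k - \mathbb{E}x^*_k = \Sigma(\hat{p}^*_k - \mathbb{E}p^*_k) + (\varphi^*_k - \mathbb{E}\varphi^*_k)$, $\mathbb{E}x^*_k = K\mathbb{E}p^*_k + \mathbb{E}\varphi^*_k$, and $z^*_k = (I_n + \Sigma H)^{-1}(\beta^*_k - \Sigma C^\top \hat{p}^*_k)$; together with (\ref{Hamiltonian system of game change}) these yield dynamics for $x^*_k - \mathbb{E}x^*_k$ and $\mathbb{E}x^*_k$.

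Applying It\^o to the ansatz and equating diffusion coefficients gives $\Pi z^*_k + \nu^*_k = -C^\top \hat{p}^*_k - H z^*_k$, hence $\nu^*_k = -(\Pi + H) z^*_k - C^\top \hat{p}^*_k$. Substituting for $z^*_k$ and using the identity $(I_n + \Sigma H)^{-1} \Sigma = \Sigma (I_n + H\Sigma)^{-1}$, the coefficient of $\hat{p}^*_k$ collapses to $-(I_n - \Pi\Sigma)(I_n + H\Sigma)^{-1} C^\top$ and that of $\beta^*_k$ to $-(\Pi + H)(I_n + \Sigma H)^{-1}$, matching the stated form in (\ref{zeta equation}). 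Matching the drift terms and grouping by $(x^*_k - \mathbb{E}x^*_k)$, $\mathbb{E}x^*_k$, and the inhomogeneous remainder, the vanishing of the coefficient of $(x^*_k - \mathbb{E}x^*_k)$ yields the Riccati equation (\ref{Pi equation}) for $\Pi$; that of $\mathbb{E}x^*_k$ yields the Riccati equation (\ref{M equation}) for $M$; and the remaining terms define $\mu^*_k$, reproducing the drift in (\ref{zeta equation}).

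The main obstacle is the final re-expression of $\nu^*_k$: to eliminate $\hat{p}^*_k$ in favour of the dynamics-carrying processes $\varphi^*_k, \zeta^*_k$, one must invert the ansatz for the centered and expected parts. Substituting (\ref{decouple form of game 4}) into (\ref{decouple form of game 2}) gives $(I_n - \Pi\Sigma)(\hat{p}^*_k - \mathbb{E}p^*_k) = \Pi(\varphi^*_k - \mathbb{E}\varphi^*_k) + (\zeta^*_k - \mathbb{E}\zeta^*_k)$, forcing the appearance of $(I_n - \Pi\Sigma)^{-1}$. Its invertibility on $[0,T]$ (equivalently, the well-posedness of the two-step decoupling) is the only delicate point, but it follows from the solvability of the Riccati systems (\ref{Sigma equation}) and (\ref{Pi equation}) under Assumptions (\ref{A1}) and (\ref{A2 of game}). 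Carefully tracking the inhomogeneous remainders in drift and diffusion then yields exactly the coefficients appearing in (\ref{zeta equation}), completing the argument.
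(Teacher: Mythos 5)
Your proposal follows essentially the same route as the paper's proof: postulate the affine ansatz $\hat{p}^*_k=\Pi(x^*_k-\mathbb{E}x^*_k)+M\mathbb{E}x^*_k+\zeta^*_k$, split into centered and mean parts, apply It\^o's formula and match diffusion then drift coefficients (using $(I_n+\Sigma H)^{-1}\Sigma=\Sigma(I_n+H\Sigma)^{-1}$) to obtain the Riccati equations for $\Pi$ and $M$ and the dynamics of $\zeta^*_k$, eliminating $\hat{p}^*_k$ through the inverse $(I_n-\Pi\Sigma)^{-1}$ exactly as the paper does. The argument is correct; indeed your inversion relation $(I_n-\Pi\Sigma)(\hat{p}^*_k-\mathbb{E}p^*_k)=\Pi(\varphi^*_k-\mathbb{E}\varphi^*_k)+(\zeta^*_k-\mathbb{E}\zeta^*_k)$ is the accurate consequence of substituting the two ansatzes into each other, and the invertibility of $I_n-\Pi\Sigma$ that you flag is used in the paper without comment as well.
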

\begin{proof}
	By combining (\ref{optimality conditions of game}) with (\ref{decouple form of game 1}), BSDE in (\ref{Hamiltonian system of game change}) is rewritten as
	\begin{equation}
		\left\{\begin{aligned}
			d x^*_k&=\left[A x^*_k-\left(BR^{-1}B^\top +C\left(I_n+\Sigma H\right)^{-1}\Sigma C^\top \right) \hat{p}^*_k+(C\left(I_n+\Sigma H\right)^{-1} \beta^*_k+f\right] d t\\
			&\quad +C\left(I_n+\Sigma H\right)^{-1} \left(\beta^*_k-\Sigma C^\top \hat{p}^*\right) d W_k, \\
			x^*_k(0)&=\left[I_n-\Sigma\left(I_n-\frac{\Gamma_0}{N}\right)^\top G\left(I_n-\frac{\Gamma_0}{N}\right)\right]^{-1}\left(\varphi^*_k(0)-\mathbb{E}\varphi^*_k(0)\right)\\
			&\quad +\left[I_n-K\left(I_n-\frac{\Gamma_0}{N}\right)^\top G\left(I_n-\Gamma_0\right)\right]^{-1}\mathbb{E}\varphi^*_k(0)\\
			&\quad -K\left[I_n-\left(I_n-\frac{\Gamma_0}{N}\right)^{\top} G\left(I_n-\Gamma_0\right)K\right]^{-1}G\eta_0.
		\end{aligned}\right.
	\end{equation}
	Noting the initial condition and structure of (\ref{Hamiltonian system of game change}), for each $k = 1,\cdots, N$, we suppose
	\begin{equation}
		\hat{p}^*_k=\Pi \left(x^*_k-\mathbb{E}x^*_k\right)+M \mathbb{E}x^*_k+\zeta^*_k,
	\end{equation}
	with $\Pi(0) = -\left(I_n-\frac{\Gamma_0}{N}\right)^{\top} G\left(I_n-\frac{\Gamma_0}{N}\right), M(0) = \left(I_n-\frac{\Gamma_0}{N}\right)^{\top} G \left(I_n-\Gamma_0\right)$ for two deterministic differentiable functions $\Pi(\cdot), M(\cdot)$, and for an $\mathcal{F}_t^k$-adapted process $\zeta^*_k(\cdot)$ satisfying an SDE
	$$
	\left\{\begin{aligned}
		d \zeta^*_k&=\chi_k^* d t+  \gamma^*_k d W_k, \\
		\zeta^*_k(0)&=\left(I_n-\frac{\Gamma_0}{N}\right)^\top G \eta_0 .
	\end{aligned}\right.
	$$
	Then, we have
	\begin{equation}\label{decouple form of game 5}
		\hat{p}^*_k-\mathbb{E}p^*_k=\Pi\left(x^*_k-\mathbb{E}x^*_k\right)+\left(\zeta^*_k-\mathbb{E} \zeta^*_k\right), 
	\end{equation}
	\begin{equation}\label{decouple form of game 6}
		\mathbb{E}p^*_k=M \mathbb{E}x^*_{k}+\mathbb{E} \zeta^*_k.
	\end{equation}
	Applying It\^o’s formula to (\ref{decouple form of game 5}), we have
	$$
	\begin{aligned}
		d\left(\hat{p}^*_k-\mathbb{E}p^*_k\right)&=\dot{\Pi}\left(x^*_k-\mathbb{E}x^*_k\right)dt+\Pi\left[\left(I_n+\Sigma H\right)^{-1}\beta^*_k-\left(I_n+\Sigma H\right)^{-1}\Sigma C^\top \hat{p}^*_k\right]dW_k\\
		&\quad -\Pi \left[A \left(x^*_k-\mathbb{E}x^*_k\right)-\left(BR^{-1}B^\top +C\left(I_n+\Sigma H\right)^{-1}\Sigma C^\top \right)\left(\hat{p}^*_k-\mathbb{E}p^*_k\right)\right.\\
		&\quad \left.+C\left(I_n+\Sigma H\right)^{-1}\left(\beta^*_k-\mathbb{E}\beta^*_k\right)\right] d t +\left(\chi^*_k-\mathbb{\chi}^*_k\right)dt+\gamma^*_k dW_k\\
		&=-\left[A^\top \left[\Pi \left(x^*_k-\mathbb{E}x^*_k\right)+\left(\zeta_k-\mathbb{E}\zeta_k\right)\right]+\left(I_n-\frac{\Gamma_1}{N}\right)^\top Q\left(I_n-\frac{\Gamma_1}{N}\right)\left(x^*_k-\mathbb{E}x^*_k\right)\right]dt\\
		&\quad -\left[C^\top \hat{p}^*_k+H\left(I_n+\Sigma H\right)^{-1}\left(\beta^*_k-\Sigma C^\top \hat{p}^*_k\right)\right] d W_k.
	\end{aligned}
	$$
	By comparing the coefficients of the diffusion terms, we obtain
	$$
	\gamma^*_k+\Pi\left[\left(I_n+\Sigma H\right)^{-1}\beta^*_k-\left(I_n+\Sigma H\right)^{-1}\Sigma C^\top \hat{p}^*_k\right]+C^{\top}\hat{p}^*_k+H\left(I_n+\Sigma H\right)^{-1}\left(\beta^*_k-\Sigma C^\top \hat{p}^*_k\right)=0,
	$$
	Using the fact that
	$$
	\left(I_n+\Sigma H\right)^{-1}\Sigma=\Sigma\left(I_n+H\Sigma \right)^{-1},
	$$
	it can be shown by a straightforward computation that 
	$$
	\gamma^*_k+\left(\Pi+H\right)\left(I_n+\Sigma H\right)^{-1}\beta^*_k+\left(I_n-\Pi\Sigma\right)\left(I_n+H\Sigma \right)^{-1}C^\top \hat{p}^*_k=0.
	$$
	Combining equations (\ref{decouple form of game 1}) with (\ref{decouple form of game 4}), we can explicitly solve for $\hat{p}^*_k(\cdot)$:
	\begin{equation}\label{p_k equation}
	    \hat{p}^*_k=\left(I_n-\Pi\Sigma\right)^{-1}\left[\Sigma\left(\varphi^*_k-\mathbb{E}\varphi^*_k\right)+\left(\zeta^*_k-\mathbb{E}\zeta^*_k\right)\right]+\left(I_n-\Pi\Sigma\right)^{-1}\left(M\mathbb{E}\varphi^*_k+\mathbb{E}\zeta^*_k\right).
    \end{equation}
	By (\ref{p_k equation}), we can explicitly solve for $\gamma^*_k(\cdot)$:
	\begin{equation}\label{gamma_k equation}
	    \begin{aligned}
	        \gamma^*_k&=-\left(\Pi+H\right)\left(I_n+\Sigma H\right)^{-1}\beta^*_k-\left[\left(I_n+\Sigma H\right)^{-1}C^\top -\Pi\left(I_n+\Sigma H\right)^{-1}\Sigma C^\top \right]\\
	        &\quad \times\left\{\left(I_n-\Pi\Sigma\right)^{-1}\left[\Sigma\left(\varphi^*_k-\mathbb{E}\varphi^*_k\right)+\left(\zeta^*_k-\mathbb{E}\zeta^*_k\right)\right]+\left(I_n-\Pi\Sigma\right)^{-1}\left(M\mathbb{E}\varphi^*_k+\mathbb{E}\zeta^*_k\right)\right\}
	    \end{aligned}
    \end{equation}
	Then by comparing the coefficients of the drift terms and noting (\ref{decouple form of game 5}), we obtain
	\begin{equation}\label{drift term of game 2}
		\begin{aligned}
			& \bigg\{\dot{\Pi}+\Pi A^\top +A \Pi-\Pi\left[B R^{-1} B^\top +C\left(I_n+\Sigma H\right)^{-1}\Sigma C^\top \right]\Pi\\
			& +\left(I_n-\frac{\Gamma_1}{N}\right)^\top Q\left(I_n-\frac{\Gamma_1}{N}\right)\bigg\}\left(x^*_k-\mathbb{E}x^*_k\right)\\
            &-\Pi\left[BR^{-1}B^{\top}+C\left(I_n+\Sigma H\right)^{-1}\Sigma C^\top \right]\left(\zeta^*_k-\mathbb{E}\zeta^*_k\right) \\
			&+\Pi C\left(I_n+\Sigma H\right)^{-1}\left(\beta^*_k-\mathbb{E}\beta^*_k\right)
            +\left(\chi^*_k-\mathbb{E} \chi^*_k\right)-A\left(\zeta^*_k-\mathbb{E} \zeta^*_k\right)=0.
		\end{aligned}
	\end{equation}
	Combining (\ref{drift term of game 2}), we can obtain the equation (\ref{Pi equation}) of $\Pi(\cdot)$  from the coefficients of $x^*_k(\cdot)-\mathbb{E}x^*_k(\cdot)$ and 
	\begin{equation}\label{drift non-homogeneous term of game 4}
		\begin{aligned}
			\chi^*_k-\mathbb{E} \chi^*_k&=\left\{\Pi\left[BR^{-1}B^\top +C\left(I_n+\Sigma H\right)^{-1}\Sigma C^\top \right]-A^\top \right\}\left(\zeta^*_k-\mathbb{E}\zeta^*_k\right)\\
			&\quad -\Pi C\left(I_n+\Sigma H\right)^{-1}\left(\beta^*_k-\mathbb{E}\beta^*_k\right).
		\end{aligned}
	\end{equation}
	Then, applying It\^o’s formula to (\ref{decouple form of game 6}), we have
	$$
	\begin{aligned}
		d \mathbb{E}x^*_k&= \dot{M} \mathbb{E}x^*_k d t+M\left\{A \mathbb{E}x^*_k-\left[BR^{-1}B^\top +C\left(I_n+\Sigma H\right)^{-1}\Sigma C^\top \right]\left(M\mathbb{E}x^*_k+\mathbb{E}\zeta^*_k\right)\right.\\
		&\quad \left.+C\left(I_n+\Sigma H\right)^{-1}\mathbb{E}\beta^*_k+f\right\} d t +\mathbb{E} \chi^*_k d t \\
		&=-\left[A^\top \left(M\mathbb{E}x^*_k+\mathbb{E}\zeta^*_k\right)+\left(I_n-\frac{\Gamma_1}{N}\right)^\top Q\left(I_n-\Gamma_1\right)\mathbb{E}x^*_k-\left(I_n-\frac{\Gamma_1}{N}\right)^\top Q\eta_1 \right]d t.
	\end{aligned}
	$$
	Then by comparing the coefficients of the drift terms and noting (\ref{decouple form of game 5}), we obtain
	\begin{equation}\label{E term of game 2}
		\begin{aligned}
			& \left\{\dot{M}+M A^\top +A M+B R^{-1} B^\top -M\left[BR^{-1}B^\top +C\left(I_n+\Sigma H\right)^{-1}\Sigma C^\top \right]M \right.\\
			&\left. +\left(I_n-\frac{\Gamma_1}{N}\right)^\top Q\left(I_n-\Gamma_1\right)\right\} \mathbb{E} x^*_k -M\left[BR^{-1}B^\top +C\left(I_n+\Sigma H\right)^{-1}\Sigma C^\top \right] \mathbb{E}\zeta^*_k \\
			&+MC\left(I_n+\Sigma H\right)^{-1}\mathbb{E}\beta^*_k+Mf+\mathbb{E}\chi^*_k+A^\top \mathbb{E}\zeta^*_k+\left(I_n-\frac{\Gamma_1}{N}\right)^\top Q \eta_1=0.
		\end{aligned}
	\end{equation}
	Combining (\ref{E term of game 2}), we can obtain the equation (\ref{M equation}) of $M(\cdot)$  from the coefficients of $\mathbb{E}x^*_k(\cdot)$ and 
	\begin{equation}\label{drift non-homogeneous term of game 5}
		\begin{aligned}
			\mathbb{E}\chi^*_k&=\left\{M\left[BR^{-1}B^\top +C\left(I_n+\Sigma H\right)^{-1}\Sigma C^\top \right]-A^{\top}\right\} \mathbb{E}\zeta^*_k \\
			&\quad -MC\left(I_n+\Sigma H\right)^{-1}\mathbb{E}\beta^*_k-Mf-\left(I_n-\frac{\Gamma_1}{N}\right)^{\top} Q \eta_1=0.
		\end{aligned}
	\end{equation}
	Combining (\ref{drift non-homogeneous term of game 4}) and (\ref{drift non-homogeneous term of game 5}), we have 
	\begin{equation}\label{drift non-homogeneous term of game 6}
		\begin{aligned}
			\chi^*_k&=\left\{\Pi\left[BR^{-1}B^\top +C\left(I_n+\Sigma H\right)^{-1}\Sigma C^\top \right]-A^\top \right\} \zeta^*_k-\Pi C\left(I_n+\Sigma H\right)^{-1}\beta^*_k \\
			&\quad +\left(\Pi-M\right)C\left(I_n+\Sigma H\right)^{-1}\mathbb{E}\beta^*_k-Mf-\left(I_n-\frac{\Gamma_1}{N}\right)^\top Q \eta_1\\
			&\quad +\left\{\left(M-\Pi\left[BR^{-1}B^\top +C\left(I_n+\Sigma H\right)^{-1}\Sigma C^\top \right]\right)\right\}\mathbb{E}\zeta^*_k,
		\end{aligned}
	\end{equation}
	and the equation (\ref{zeta equation}) of $\zeta^*_k(\cdot)$ can be got by using (\ref{gamma_k equation}) and (\ref{drift non-homogeneous term of game 6}). Then, we completed the proof.
\end{proof}
\begin{Remark}
	Under Assumptions (\ref{A1})–(\ref{A2 of game}), the Riccati equations (\ref{Pi equation}) and (\ref{M equation}) admit unique solutions, respectively \cite{Lim-Zhou-2001}. Similarly, once $\Sigma(\cdot)$, $K(\cdot)$, $\Pi(\cdot)$, $M(\cdot)$, $(\varphi^*_k(\cdot),\beta^*_k(\cdot))$ are known, SDE (\ref{zeta equation}) has a unique solution $\zeta^*_k$.
\end{Remark}
The above discussion can be summarized as the following theorem.
\begin{Theorem}
	Let Assumptions (\ref{A1}), (\ref{A2 of game}) hold. Then Riccati equations (\ref{Sigma equation}), (\ref{K equation}), (\ref{Pi equation}), (\ref{M equation}), BSDE (\ref{varphi equation}) and SDE (\ref{zeta equation}) admit unique solutions, respectively. In addition, the optimal strategy of agent $\mathcal{A}_k$ has a feedback form as follows:
	\begin{equation}\label{optimal game strategy}
		u^*_k=-R^{-1} B^\top\big[\Pi x^*_k+\left(M-\Pi\right) \mathbb{E}x^*_k+\zeta^*_k\big],
	\end{equation}
	where
	\begin{equation}
		\left\{\begin{aligned}
			d x^*_k&=\left[\left(A-BR^{-1} B^\top \Pi\right) x^*_k-BR^{-1} B^\top \left(M-\Pi\right) \mathbb{E}x^*_k\right.\\
			&\qquad \left.-BR^{-1} B^\top \zeta^*_k+C z^*_k+f\right] d t+z^*_k d W_k, \\
			x^*_k(T)&=\xi_k,
		\end{aligned}\right.
	\end{equation}
	\begin{equation}
		\left\{\begin{aligned}
			d \mathbb{E}x^*_k&=\left\{\left(A-BR^{-1} B^\top M-C \left(I_n+\Sigma H\right)^{-1}\Sigma C^\top M\right) \mathbb{E}x^*_k \right.\\
			&\quad \left.-\left[BR^{-1} B^\top +C \left(I_n+\Sigma H\right)^{-1}\Sigma C^\top \right]\mathbb{E}\zeta^*_k+C \left(I_n+\Sigma H\right)^{-1} \mathbb{E}\beta^*_k+f\right\} d t \\
			\mathbb{E}x^*_k(T)&=\mathbb{E}\xi_k.
		\end{aligned}\right.
	\end{equation}
\end{Theorem}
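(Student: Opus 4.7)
The plan is to assemble the pieces already built in the two preceding propositions rather than redoing the variational analysis. First, I would invoke the existence/uniqueness statements recorded in the two remarks: the Riccati equations (\ref{Sigma equation}) and (\ref{K equation}) are of the standard indefinite LQ type treated in \cite{Li-Sun-Xiong-2019}, while (\ref{Pi equation}) and (\ref{M equation}) fall into the framework of \cite{Lim-Zhou-2001} (with terminal data replaced by initial data, reflecting the backward nature of the state equation). Once $\Sigma(\cdot),K(\cdot)$ are solved, the coupling inside (\ref{varphi equation}) through $\beta_k^*$ is only via the diffusion coefficient and the generator is Lipschitz in $(\varphi_k^*,\beta_k^*)$, so standard BSDE theory yields a unique adapted solution. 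Then $\Pi(\cdot),M(\cdot),\Sigma(\cdot),K(\cdot)$ and $(\varphi_k^*(\cdot),\beta_k^*(\cdot))$ being known, the forward SDE (\ref{zeta equation}) has Lipschitz coefficients and thus admits a unique strong solution.

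Next, for the feedback representation, I would start from the stationary condition (\ref{optimality conditions of game}), namely $u_k^*=-R^{-1}B^{\top}\hat{p}_k^*$, and substitute the decoupling identity (\ref{decouple form of game 4}) established in Proposition \ref{decouple proposition 2}:
\begin{equation*}
\hat{p}_k^*=\Pi(x_k^*-\mathbb{E}x_k^*)+M\mathbb{E}x_k^*+\zeta_k^*=\Pi x_k^*+(M-\Pi)\mathbb{E}x_k^*+\zeta_k^*.
\end{equation*}
This immediately yields the claimed feedback form (\ref{optimal game strategy}). Plugging this expression back into the forward equation of (\ref{Hamiltonian system of game change}) gives the closed-loop state equation for $x_k^*$ in the theorem.

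Finally, for the equation governing $\mathbb{E}x_k^*$, I would take expectations in the closed-loop equation and use the explicit formula (\ref{z_k}) for $z_k^*$, namely $z_k^*=(I_n+\Sigma H)^{-1}(\beta_k^*-\Sigma C^{\top}\hat{p}_k^*)$, together with the identity $\mathbb{E}\hat{p}_k^*=\mathbb{E}p_k^*=M\mathbb{E}x_k^*+\mathbb{E}\zeta_k^*$ coming from (\ref{decouple form of game 6}). Substituting these into the expectation of the drift and simplifying the $C$-term via $(I_n+\Sigma H)^{-1}\Sigma=\Sigma(I_n+H\Sigma)^{-1}$ recovers exactly the ODE stated for $\mathbb{E}x_k^*$.

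The main obstacle I anticipate is not algebraic but structural: carefully justifying well-posedness of the Riccati equation (\ref{Pi equation})--(\ref{M equation}) in the present setting, where the nonlinear term involves $C(I_n+\Sigma H)^{-1}\Sigma C^{\top}$ and therefore depends implicitly on the previously-solved $\Sigma(\cdot)$. One needs to check that this data is bounded and symmetric nonnegative on $[0,T]$ so that the cited results in \cite{Lim-Zhou-2001} genuinely apply; this follows from $\Sigma(\cdot)\geqslant 0$ together with the boundedness of $C,H$ under Assumption \ref{A2 of game}. Everything else is a bookkeeping consequence of the two decoupling propositions.
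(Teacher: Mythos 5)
Your proposal is correct and follows essentially the same route as the paper, which offers no separate argument for this theorem beyond the phrase that it ``summarizes the above discussion'': well-posedness is quoted from the two remarks (citing \cite{Li-Sun-Xiong-2019} for (\ref{Sigma equation})--(\ref{K equation}) and \cite{Lim-Zhou-2001} for (\ref{Pi equation})--(\ref{M equation})), the feedback form comes from inserting the relation $\hat{p}^*_k=\Pi(x^*_k-\mathbb{E}x^*_k)+M\mathbb{E}x^*_k+\zeta^*_k$ of Proposition \ref{decouple proposition 2} into the stationarity condition (\ref{optimality conditions of game}), and the equation for $\mathbb{E}x^*_k$ follows by taking expectations using (\ref{z_k}) and $\mathbb{E}p^*_k=M\mathbb{E}x^*_k+\mathbb{E}\zeta^*_k$. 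Your added care about the boundedness and nonnegativity of $C(I_n+\Sigma H)^{-1}\Sigma C^{\top}$ before invoking \cite{Lim-Zhou-2001} is a reasonable refinement that the paper leaves implicit.
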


\section{Mean field social optima problem}

\subsection{Optimal strategy of mean field social optima problem}

In this section, we study the linear-quadratic mean field social optima problem  (\textbf{PS}). For avoiding heavy notation, we denote
$$
\begin{aligned}
	&\left\{\begin{aligned}
		Q_{\Gamma}&=Q \Gamma_1+\Gamma_1^{\mathrm{T}} Q-\Gamma_1^{\mathrm{T}} Q \Gamma_1, \quad
		\bar{\eta}_1=Q \eta_1-\Gamma_1^{\mathrm{T}} Q \eta_1, \\
		G_{\Gamma}&=G \Gamma_0+\Gamma_0^{\mathrm{T}} G-\Gamma_0^{\mathrm{T}} G \Gamma_0, \quad
		\bar{\eta}_0=G \eta_0-\Gamma_0^{\mathrm{T}} G \eta_0.
	\end{aligned}\right.
\end{aligned}
$$
First, we introduce the following assumption:
\begin{Assumption}\label{A2 of social}
	(i) $A(\cdot),C(\cdot), \Gamma_1(\cdot) \in L^\infty\left(0, T ; \mathbb{R}^{n \times n}\right)$, and $B(\cdot) \in L^{\infty}(0, T$; $\left.\mathbb{R}^{n \times r}\right) ;$
	
	(ii) $Q(\cdot),H(\cdot) \in L^\infty\left(0, T ; \mathbb{S}^n\right)$, $R(\cdot) \in L^\infty\left(0, T ; \mathbb{S}^r\right)$, and $R(\cdot)>0$, $Q(\cdot) \geqslant 0$, $H(\cdot) \geqslant 0$, $Q_\Gamma(\cdot)\leqslant0 ;$
	
	(iii) $f(\cdot),\eta_1(\cdot) \in L^2\left(0, T ; \mathbb{R}^n\right) ;$
	
	(iv) $\Gamma_0 \in \mathbb{R}^{n \times n}$, $\eta_0 \in \mathbb{R}^n$, $G \in \mathbb{S}^n$ are bounded and $G \geqslant 0$, $G_{\Gamma}\leqslant 0.$
\end{Assumption} 
We next obtain the necessary and sufficient conditions for the
existence of optimal social strategy of Problem (\ref{social problem}), by using variational analysis.
\begin{Theorem}
	Assume (\ref{A1}) and (\ref{A2 of social}) hold. Then Problem (\ref{social problem}) has an optimal social strategy $u^*(\cdot)=(u^*_1(\cdot),\ldots,u^*_N(\cdot))$, $u^*_k(\cdot) \in \mathcal{U}^{ad}_k$ if and only if the following Hamiltonian system admits a set of solutions $(x^*_k(\cdot),z^*_k(\cdot),p^*_k(\cdot),k=1,\cdots,N) :$
	\begin{equation}\label{Hamiltonian system of social}
		\left\{\begin{aligned}
			d x^*_k&= \left(A x^*_k+B u^*_k+C z^*_k+f\right) d t+z^*_k d W_k, \\
			d p^*_k&= -\left[A^\top p^*_k+ Qx^*_k-Q_{\Gamma} x^{*(N)}-\bar{\eta}_1\right] d t-\left[C^\top p^*_k+H z^*_k\right] d W_k, \\
			x^*_k(T)&= \xi_k,\quad p^*_k(0)=- \left[Gx^*_k(0)-G_\Gamma x^{*(N)}(0)-\bar{\eta}_0\right],
		\end{aligned}\right.
	\end{equation}
	and the optimal strategy $u^*_k(\cdot)$ of agent $\mathcal{A}_k$ satisfies the
	stationary condition:
	\begin{equation}\label{optimality conditions of social}
		u^*_k=-R^{-1}B^\top \mathbb{E}\left[p^*_k \mid \mathcal{F}_t^k\right].
	\end{equation}
\end{Theorem}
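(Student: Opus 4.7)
The plan is to adapt the variational analysis used for the game case to the social cost $\mathcal{J}_{soc}$. The main novelty is that perturbing only $u_k$ now alters every agent's cost through the mean-field term $x^{(N)}$, so the first-order variation must be aggregated over all $j$. Concretely, for the necessity direction I would fix $k$, set $u_k^\varepsilon = u_k^* + \varepsilon v_k$ for arbitrary $v_k \in \mathcal{U}_k^{ad}$, and leave $u_j^\varepsilon = u_j^*$ for $j \neq k$. Linearity of (\ref{state}) forces $x_j^\varepsilon \equiv x_j^*$ and $z_j^\varepsilon \equiv z_j^*$ for every $j \neq k$, so $\Delta x^{(N)} = \Delta x_k/N$, while $\Delta x_k, \Delta z_k$ satisfy the same linear BSDE with generator $A\Delta x_k + Bv_k + C\Delta z_k$ and zero terminal value encountered in the game case.

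Expanding $\mathcal{J}_{soc}(u^\varepsilon) - \mathcal{J}_{soc}(u^*) = \tfrac{\varepsilon^2}{2} Y_1 + \varepsilon Y_2$, the $Q$-cross-term in $Y_2$ is a sum over $j$ of $\bigl\langle y_j^*, Q(\Delta x_j - \Gamma_1 \Delta x^{(N)})\bigr\rangle$, where $y_j^* := x_j^* - \Gamma_1 x^{*(N)} - \eta_1$. Using $\Delta x_j = 0$ for $j\neq k$ together with $\tfrac{1}{N}\sum_j y_j^* = (I-\Gamma_1)x^{*(N)} - \eta_1$, and the definitions $Q_\Gamma = Q\Gamma_1+\Gamma_1^\top Q - \Gamma_1^\top Q\Gamma_1$, $\bar\eta_1 = Q\eta_1-\Gamma_1^\top Q\eta_1$, this sum collapses to
\begin{equation*}
\bigl\langle Q x_k^* - Q_\Gamma x^{*(N)} - \bar\eta_1,\; \Delta x_k\bigr\rangle,
\end{equation*}
with the completely analogous identity at $t=0$ producing $G x_k^*(0) - G_\Gamma x^{*(N)}(0) - \bar\eta_0$. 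This is exactly why the adjoint $p_k^*$ in (\ref{Hamiltonian system of social}) is chosen with driver $Qx_k^* - Q_\Gamma x^{*(N)} - \bar\eta_1$ and initial value $-(Gx_k^*(0) - G_\Gamma x^{*(N)}(0) - \bar\eta_0)$, rather than the game-type data of (\ref{Hamiltonian system of game}).

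Taking $p_k^*$ as the adapted solution of that BSDE, I would apply It\^o's formula to $\langle p_k^*, \Delta x_k\rangle$ on $[0,T]$. By design, the drift, diffusion, and boundary contributions cancel exactly the $y_j^*$-, $z_k^*$-, and initial-cost terms in $Y_2$, leaving
\begin{equation*}
Y_2 = \mathbb{E}\int_0^T \bigl\langle R u_k^* + B^\top p_k^*,\; v_k\bigr\rangle\, dt.
\end{equation*}
Optimality forces $Y_2 = 0$ for every $v_k \in \mathcal{U}_k^{ad}$; since $v_k$ is $\mathcal{F}_t^k$-adapted and $u_k^*$ is $\mathcal{F}_t^k$-measurable, the tower property gives $R u_k^* + B^\top \mathbb{E}[p_k^* \mid \mathcal{F}_t^k] = 0$, which is exactly (\ref{optimality conditions of social}); running the argument for each $k$ yields the full stationary system.

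For sufficiency, because each $x_j$ depends only on its own control $u_j$, the map $u \mapsto \mathcal{J}_{soc}(u)$ is a sum of quadratics in affine functionals of $u$ and is therefore convex under $R>0$ and $Q,H,G \geq 0$; in particular $Y_1 \geq 0$ in the perturbation expansion above. Setting $\varepsilon = 1$, $v_k = u_k - u_k^*$ and iterating coordinatewise (or, equivalently, using a joint perturbation and the analogous aggregation identity) then gives $\mathcal{J}_{soc}(u) \geq \mathcal{J}_{soc}(u^*)$ for every admissible $u$. The main technical obstacle is the bookkeeping behind the aggregation identity, and in particular the verification that the inhomogeneous $\eta_1, \eta_0$ contributions combine cleanly into the single drivers $\bar\eta_1, \bar\eta_0$; once this is in place, the remainder of the argument closely parallels the game-case proof.
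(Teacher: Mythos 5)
Your proposal is correct and follows essentially the same variational/duality route as the paper: expand $\mathcal{J}_{soc}$ to first order, use the aggregation identity over the agents' costs that produces $Q_\Gamma$, $\bar{\eta}_1$, $G_\Gamma$, $\bar{\eta}_0$, cancel the cross terms by applying It\^o's formula to $\left\langle p^*_k,\Delta x_k\right\rangle$ with the adjoint defined in (\ref{Hamiltonian system of social}), and then project onto $\mathcal{F}_t^k$ to get (\ref{optimality conditions of social}). The only difference is bookkeeping: you perturb one coordinate $u_k$ at a time (so $\Delta x^{(N)}=\Delta x_k/N$) and invoke joint convexity for sufficiency, whereas the paper perturbs all controls simultaneously and sums the duality relations over $k$; both rest on the same identity and yield the same stationarity condition.
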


\begin{proof}
	Suppose $u^*(\cdot)=(u^*_1(\cdot),\cdots,u^*_N(\cdot))$ is optimal social strategy of Problem (\ref{social problem}) and $(x^*_k(\cdot),\\z^*_k(\cdot),k=1,\cdots,N)$ are the
	corresponding optimal trajectories. For any $u_k(\cdot) \in \mathcal{U}^{ad}_k$ and any $\varepsilon>0$, we denote 
	$$
	u_k^\varepsilon(\cdot)=u^*_k(\cdot)+\varepsilon v_k(\cdot) \in \mathcal{U}^{ad}_k,
	$$
	where $v_k(\cdot)=u_k(\cdot)-u^*_k(\cdot)$. And we set $u^\epsilon(\cdot)\equiv(u^\epsilon_1(\cdot),\cdots,u^\epsilon_N(\cdot))$.
	
	Let $(x^\epsilon_k(\cdot),z^\epsilon_k(\cdot),k=1,\cdots,N)$ be the solution of the following perturbed state
	equation
	$$
	\left\{\begin{aligned}
		x_k^\varepsilon&= \left(A x_k^\varepsilon+B u_k^\varepsilon+C z_k^\varepsilon+f\right) d t+z_k^\varepsilon d W_k, \\
		x_k^\varepsilon(T)&= \xi_k .
	\end{aligned}\right.
	$$
	Let $\Delta x_k(\cdot)=\frac{x_k^\varepsilon(\cdot)-\bar{x}_k(\cdot)}{\varepsilon}, \Delta z_k(\cdot)=\frac{z_k^\varepsilon(\cdot)-\bar{z}_k(\cdot)}{\varepsilon}$. It can be verified that $(\Delta x_k(\cdot),\Delta z_k(\cdot),k=1,\cdots,N)$ satisfies
	$$
	\left\{\begin{aligned}
		d \Delta x_k&=\left(A \Delta x_k+B v_k+C \Delta z_k\right) d t+\Delta z_k d W_k, \\
		\Delta x_k(T)&=0 .
	\end{aligned}\right.
	$$
	Applying It\^o’s formula to $\left\langle \Delta x_k(\cdot), p^*_k(\cdot)\right\rangle$, we derive
	\begin{equation}\label{Ito maximum of social}
		\begin{aligned}
			&\sum_{k=1}^N\mathbb{E}\left[0-\left\langle \Delta x_k(0),- \left[Gx^*_k(0)-G_\Gamma x^{*(N)}(0)-\bar{\eta}_0\right]\right\rangle\right]=\sum_{k=1}^N\mathbb{E} \int_0^T\left\langle B v_k, p^*_k\right\rangle\\
			&-\left\langle \Delta x_k,Qx^*_k-Q_{\Gamma} x^{*(N)}-\bar{\eta}_1\right\rangle-\left\langle \Delta z_k, H z^*_k\right\rangle d t.
		\end{aligned}
	\end{equation}
	Then
	$$
		\mathcal{J}_{soc}\left(u^\epsilon(\cdot)\right)-\mathcal{J}_{soc}\left(u^*(\cdot)\right)=\frac{\varepsilon^2}{2} X_1+\varepsilon X_2,
	$$
	where 
	$$
	\begin{aligned}
		X_1&= \sum_{k=1}^N\mathbb{E}\left[\int_0^T \left(\Delta x_k-\Gamma_1\Delta x^{(N)}  \right)^\top Q\left(\Delta x_k-\Gamma_1\Delta x^{(N)} \right)+ v_k^\top R v_k+ \Delta z_{k}^\top H \Delta z_k d t\right. \\
		&\qquad\quad \left.+\left(\Delta x_k(0)-\Gamma_0\Delta x^{(N)}(0) \right)^\top G\left(\Delta x_k(0)-\Gamma_0\Delta x^{(N)}(0) \right)\right],
	\end{aligned}
	$$
	\begin{equation}\label{X_2 of social}
		\begin{aligned}
			X_2&= \sum_{k=1}^N\mathbb{E}  {\left[\int_0^T\left(x^*_k-\Gamma_1 x^{*(N)}-\eta_1\right)^\top Q\left(\Delta x_k-\Gamma_1\Delta x^{(N)} \right)+u_k^{*\top} R v_k+z_k^{*\top} H \Delta z_k d t\right.} \\
			&\qquad\quad \left.+\left(x_k^*(0)-\Gamma_0 x^{*(N)}(0)-\eta_0\right)^\top G\left(\Delta x_k(0)-\Gamma_0\Delta x^{(N)}(0) \right)\right]\\	
			&= \sum_{k=1}^N\mathbb{E} {\left[\int_0^T \left(Qx^*_k-Q_{\Gamma} x^{*(N)}-\bar{\eta}_1\right)^\top \Delta x_k +u_k^{*\top} R v_k+z_k^{*\top} H \Delta z_{k} d t\right.} \\
			&\qquad\quad \left.+\left(Gx^*_k(0)-G_\Gamma x^{*(N)}(0)-\bar{\eta}_0\right)^\top \Delta x_k(0)\right].
		\end{aligned}
	\end{equation}
	Due to the optimality of $u^*_k(\cdot)$, we have $\mathcal{J}_k\left(u_k^\varepsilon(\cdot) ; u^*_{-k}(\cdot)\right)-\mathcal{J}_k\left(u^*_k(\cdot) ; u^*_{-k}(\cdot)\right) \geqslant 0$. Noticing $X_1 \geqslant 0$ and the arbitrariness of $\epsilon$, we have $X_2=0$. Then, simplifying (\ref{X_2 of social}) with (\ref{Ito maximum of social}), we have
	$$
	X_2=\mathbb{E} \int_0^T\left\langle B^\top p^*_k+R u^*_k, v_k\right\rangle d t.
	$$
	Due to the arbitrariness of $v_k$, we obtain the optimal conditions (\ref{optimality conditions of social}).
\end{proof}

Similarly,  we next derive proper form for the feedback representation of optimality conditions. We still have the property $\mathbb{E}\left[x^*_j(t) \mid \mathcal{F}_t^k\right]=\mathbb{E}\left[x^*_j(t)\right]=\mathbb{E}\left[x^*_k(t)\right]$, for any $j \neq k$. And then, we have
$$
\mathbb{E}\left[x^{*(N)} \mid \mathcal{F}_t^k\right]=\frac{1}{N} x^*_k+\frac{1}{N} \sum_{j \neq k } \mathbb{E}x^*_j=\frac{1}{N} x^*_k+\frac{N-1}{N}  \mathbb{E}x^*_k.
$$
And then, we introduce the following FBSDE:
\begin{equation}\label{Hamiltonian system of social change}
	\left\{\begin{aligned}
		d x^*_k&= \left(A x^*_k+B u^*_k+C z^*_k+f\right) d t+z^*_k d W_k, \\
		d \hat{p}^*_k&= -\left\{A^\top \hat{p}^*_k+\left(Q-\frac{Q_{\Gamma}}{N}\right) x^*_k-\frac{N-1}{N}Q_{\Gamma} \mathbb{E} x^*_k-\bar{\eta}_1\right\} d t-\left[C^\top \hat{p}^*_k+H z^*_k\right] d W_k, \\
		x^*_k(T) &=\xi_k,\quad \hat{p}^*_k(0)=-\left[\left(G-\frac{G_{\Gamma}}{N}\right) x^*_k-\frac{N-1}{N}G_{\Gamma} \mathbb{E} x^*_k-\bar{\eta}_0\right],
	\end{aligned}\right.
\end{equation}
where $\hat{p}^*_k:=\mathbb{E}\left[p^*_k \mid \mathcal{F}_t^k\right]$. Similarly, we have the following result whose proof is similar to Proposition \ref{FBSDE equivalent theorem}.

\begin{Proposition}
	The FBSDE (\ref{Hamiltonian system of social}) admits a set of adapted solutions if and only if FBSDE (\ref{Hamiltonian system of social change}) admits a set of adapted solutions.
\end{Proposition}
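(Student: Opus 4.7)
The plan is to mirror the two-step argument in the proof of Proposition \ref{FBSDE equivalent theorem}, substituting the social-cost drivers $Q, Q_\Gamma, \bar{\eta}_1$ and boundary data $G, G_\Gamma, \bar{\eta}_0$ for their game-theoretic counterparts. For the sufficient direction, given an adapted solution of (\ref{Hamiltonian system of social change}), the stationary relation (\ref{optimality conditions of social}) pins down $u_k^*$ in terms of $\hat{p}_k^*$; the BSDE component of (\ref{Hamiltonian system of social}) for $(x_k^*, z_k^*)$ is then a standard linear BSDE with terminal datum $\xi_k$ and known inhomogeneity, so it admits a unique solution coinciding with the one supplied by (\ref{Hamiltonian system of social change}); and the forward SDE for $p_k^*$, integrated from the prescribed initial datum, has all of its coefficients now determined (since each $x_k^*$ is available, so is $x^{*(N)}$), hence possesses a unique adapted solution. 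It remains to check that $\mathbb{E}[p_k^* \mid \mathcal{F}_t^k]$ actually equals the $\hat{p}_k^*$ supplied by (\ref{Hamiltonian system of social change}); I would do this by taking conditional expectation of the newly-constructed $p_k^*$-SDE and matching it against the $\hat{p}_k^*$-SDE in (\ref{Hamiltonian system of social change}) term by term.

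For the necessary direction, I would start from an adapted solution of (\ref{Hamiltonian system of social}), define $\hat{p}_k^*(t) := \mathbb{E}[p_k^*(t) \mid \mathcal{F}_t^k]$, and apply $\mathbb{E}[\,\cdot \mid \mathcal{F}_t^k]$ to the forward SDE for $p_k^*$. Two ingredients (precisely those used in Proposition \ref{FBSDE equivalent theorem}) do the work: (i) the identity $\mathbb{E}[x^{*(N)} \mid \mathcal{F}_t^k] = \tfrac{1}{N} x_k^* + \tfrac{N-1}{N} \mathbb{E} x_k^*$, which is the analogue of Lemma \ref{Lemma1} in the social setting and is already invoked in the text just preceding (\ref{Hamiltonian system of social change}); and (ii) Lemma 5.4 of Xiong \cite{Xiong-2008}, which commutes the conditional expectation through the stochastic integral against $W_k$. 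A direct linear-algebraic calculation then converts the drift $A^\top p_k^* + Q x_k^* - Q_\Gamma x^{*(N)} - \bar{\eta}_1$ into $A^\top \hat{p}_k^* + (Q - \tfrac{Q_\Gamma}{N}) x_k^* - \tfrac{N-1}{N} Q_\Gamma \mathbb{E} x_k^* - \bar{\eta}_1$, and analogously the initial datum into the form stated in (\ref{Hamiltonian system of social change}), thereby reproducing the latter system exactly.

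The only potentially delicate step is the commutation of $\mathbb{E}[\,\cdot \mid \mathcal{F}_t^k]$ with the $W_k$-stochastic integral, which is precisely why Xiong's lemma is cited. Everything else is algebraic bookkeeping with the symmetrised coefficients $Q_\Gamma, \bar{\eta}_1, G_\Gamma, \bar{\eta}_0$, which already absorb the cross-agent contributions produced by the social cost; consequently no new obstacle appears beyond what was handled in Proposition \ref{FBSDE equivalent theorem}, and the argument is a routine adaptation.
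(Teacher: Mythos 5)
Your proposal is correct and follows essentially the same route as the paper, which proves this proposition exactly as in Proposition \ref{FBSDE equivalent theorem}: one direction by noting that a solution of (\ref{Hamiltonian system of social change}) fixes $u_k^*$ and decouples (\ref{Hamiltonian system of social}), the other by conditioning the $p_k^*$-equation on $\mathcal{F}_t^k$ using the mean-field identity for $\mathbb{E}[x^{*(N)}\mid\mathcal{F}_t^k]$ and Lemma 5.4 of \cite{Xiong-2008}. Your explicit verification that $\mathbb{E}[p_k^*\mid\mathcal{F}_t^k]$ coincides with the supplied $\hat{p}_k^*$ is a detail the paper leaves implicit, but it does not change the argument.
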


Next, we divide the decoupling procedure into two steps.
\begin{Proposition}
	Let Assumption (\ref{A1}), (\ref{A2 of social}) hold. Let $(x^*_k(\cdot),z^*_k(\cdot),p^*_k(\cdot),k=1,\cdots,N)$ be the solution of FBSDE (\ref{Hamiltonian system of social change}). Then, we have the following relations:
	\begin{equation}
		\left\{\begin{aligned}
			x^*_k&=\Sigma \left(\hat{p}^*_k-\mathbb{E} p^*_k\right)+K \mathbb{E} p^*_k+\varphi^*_k,\\
			z^*_k&=\left(I_n+\Sigma H\right)^{-1} \left(\beta^*_k-\Sigma C^\top \hat{p}^*_k\right),
		\end{aligned}\right.
	\end{equation}
	and $\Sigma(\cdot),K(\cdot),\varphi^*_k(\cdot)$ are solutions of the following equations, respectively, 
	\begin{equation}\label{Sigma equation of social}
		\left\{\begin{aligned}
			&\dot{\Sigma}-A \Sigma-\Sigma A^\top-\Sigma\left(Q-\frac{Q_{\Gamma}}{N}\right) \Sigma+B R^{-1} B^\top+C \left(I_n+\Sigma H\right)^{-1} \Sigma C^\top=0, \\
			&\Sigma(T)=0,
		\end{aligned}\right.
	\end{equation}
	\begin{equation}\label{K equation of social}
		\left\{\begin{aligned}
			& \dot{K}-A K-K A^\top-K\left(Q-Q_\Gamma\right) K+B R^{-1} B^\top+C \left(I_n+\Sigma H\right)^{-1} \Sigma C^\top=0, \\
			& K(T)=0,
		\end{aligned}\right.
	\end{equation}
	\begin{equation}\label{varphi equation of social}
		\left\{\begin{aligned}
			d\varphi^*_k&= \left\{\left[A+\Sigma\left(Q-\frac{Q_\Gamma}{N}\right)\right] \varphi^*_k -\left[\left(\Sigma-K\right)Q-\left(K-\frac{\Sigma}{N}\right)Q_\Gamma\right]\mathbb{E}\varphi^*_k \right.\\
			&\qquad +C\left(I_n+\Sigma H\right)^{-1}\beta^*_k-K \bar{\eta}_1+f\bigg\}dt+\beta^*_k d W_k,\\
			\varphi^*_k(T)&= \xi_k.
		\end{aligned}\right.
	\end{equation}
\end{Proposition}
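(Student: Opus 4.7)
The plan is to mirror the two-step decoupling argument already used for the game FBSDE \eqref{Hamiltonian system of game change}, adapting it to the social FBSDE \eqref{Hamiltonian system of social change}. I would posit the ansatz $x^*_k = \Sigma(p^*_k - \mathbb{E}p^*_k) + K\mathbb{E}p^*_k + \varphi^*_k$ with $\Sigma(T)=0$, $K(T)=0$, and let $\varphi^*_k$ solve a BSDE $d\varphi^*_k = \alpha^*_k\,dt + \beta^*_k\,dW_k$ with terminal $\varphi^*_k(T)=\xi_k$. Splitting this ansatz into its fluctuation part $x^*_k - \mathbb{E}x^*_k = \Sigma(\hat{p}^*_k - \mathbb{E}p^*_k) + (\varphi^*_k - \mathbb{E}\varphi^*_k)$ and its mean part $\mathbb{E}x^*_k = K\mathbb{E}p^*_k + \mathbb{E}\varphi^*_k$ reduces the task to matching drift and diffusion coefficients in two separate identities.

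For the fluctuation, I would apply It\^o's formula to both sides and read off the dynamics of $\hat{p}^*_k - \mathbb{E}p^*_k$ from the backward equation in \eqref{Hamiltonian system of social change}. Note that the generator of $\hat{p}^*_k$ contains $(Q - Q_\Gamma/N)x^*_k - \frac{N-1}{N}Q_\Gamma \mathbb{E}x^*_k$, so after subtracting its own mean only $(Q - Q_\Gamma/N)(x^*_k - \mathbb{E}x^*_k)$ survives. Matching the $dW_k$ coefficients yields $(I_n + \Sigma H)z^*_k = \beta^*_k - \Sigma C^\top \hat{p}^*_k$, which inverts (since $\Sigma \geq 0$, $H \geq 0$) to the stated formula for $z^*_k$. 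Matching the coefficient of $\hat{p}^*_k - \mathbb{E}p^*_k$ in the drift gives the Riccati equation \eqref{Sigma equation of social} for $\Sigma$, and the remaining terms produce a first identity relating $\alpha^*_k - \mathbb{E}\alpha^*_k$ to $\varphi^*_k - \mathbb{E}\varphi^*_k$ and $\beta^*_k - \mathbb{E}\beta^*_k$.

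For the mean, I would differentiate $\mathbb{E}x^*_k = K\mathbb{E}p^*_k + \mathbb{E}\varphi^*_k$ in $t$, substitute the drift of $\mathbb{E}x^*_k$ from the forward equation and the drift of $\mathbb{E}p^*_k$ from the backward equation, and use the closed-form expression for $z^*_k$ to eliminate $\mathbb{E}z^*_k$. Comparing the coefficient of $\mathbb{E}p^*_k$ gives the Riccati equation \eqref{K equation of social} for $K$, in which $Q - Q_\Gamma$ (not $Q - Q_\Gamma/N$) appears because taking expectation collapses the two drift terms $(Q - Q_\Gamma/N)\mathbb{E}x^*_k - \frac{N-1}{N}Q_\Gamma\mathbb{E}x^*_k$ into $(Q - Q_\Gamma)\mathbb{E}x^*_k$. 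The remaining non-homogeneous terms yield a second identity, now for $\mathbb{E}\alpha^*_k$.

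Adding the fluctuation identity and the mean identity recovers $\alpha^*_k$ in closed form, producing the BSDE \eqref{varphi equation of social} for $\varphi^*_k$. The main bookkeeping hazard is precisely the asymmetry between the fluctuation coefficient $Q - Q_\Gamma/N$ and the mean coefficient $Q - Q_\Gamma$, which is the social-problem analogue of the asymmetry between $(I_n - \Gamma_1/N)^\top Q(I_n - \Gamma_1/N)$ and $(I_n - \Gamma_1/N)^\top Q(I_n - \Gamma_1)$ that appears in the game decoupling; once this is tracked carefully, the remaining steps are parallel to the game case and present no further obstacle.
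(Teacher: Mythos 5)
Your plan is correct and is essentially the paper's own argument: the paper omits the proof here precisely because it is the game-case decoupling (its Proposition 3.2) repeated verbatim with $(I_n-\Gamma_1/N)^\top Q(I_n-\Gamma_1/N)$, $(I_n-\Gamma_1/N)^\top Q(I_n-\Gamma_1)$ replaced by $Q-Q_\Gamma/N$, $Q-Q_\Gamma$, and you correctly identify the one nontrivial bookkeeping point, namely that the mean dynamics collapse $(Q-Q_\Gamma/N)-\frac{N-1}{N}Q_\Gamma$ into $Q-Q_\Gamma$, which is what produces the different quadratic terms in \eqref{Sigma equation of social} and \eqref{K equation of social}. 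Your ansatz, diffusion matching for $z^*_k$, drift matching for the two Riccati equations, and recombination of the fluctuation and mean identities to recover $\alpha^*_k$ all coincide with the paper's route, so there is nothing to add.
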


\begin{Proposition}\label{decouple proposition 4}
	Let Assumption (\ref{A1}), (\ref{A2 of social}) hold. Let $(x^*_k(\cdot),z^*_k(\cdot),p^*_k(\cdot),k=1,\cdots,N)$ be the solution of FBSDE (\ref{Hamiltonian system of social change}). Then, we have the following relations:
	\begin{equation}\label{decouple form of social 4}
		\hat{p}^*_k=\Pi \left(x^*_k-\mathbb{E}x^*_k\right)+M \mathbb{E}x^*_k+\zeta^*_k,
	\end{equation}
	where $\Pi(\cdot),M(\cdot),\zeta^*_k(\cdot)$ are solutions of the following equations, respectively, 
	\begin{equation}\label{Pi equation of social}
		\left\{\begin{aligned}
			&\dot{\Pi}+\Pi A^\top+A \Pi-\Pi\left[B R^{-1} B^\top +C\left(I_n+\Sigma H\right)^{-1}\Sigma C^\top\right]\Pi+\left(Q-\frac{Q_{\Gamma}}{N}\right)=0, \\
			&\Pi(0)=-\left(G-\frac{G_{\Gamma}}{N}\right),
		\end{aligned}\right.
	\end{equation}
	\begin{equation}\label{M equation of social}
		\left\{\begin{aligned}
			&\dot{M}+M A^\top+A M-M\left[BR^{-1}B^\top+C\left(I_n+\Sigma H\right)^{-1}\Sigma C^\top\right]M+ \left(Q-Q_{\Gamma}\right)=0,\\
			&M(0)=-\left(G-G_{\Gamma}\right),
		\end{aligned}\right.
	\end{equation}
	\begin{equation}\label{zeta equation of social}
		\left\{\begin{aligned}
			d \zeta^*_k&= \bigg\{\left\{\Pi\left[BR^{-1}B^\top +C\left(I_n+\Sigma H\right)^{-1}\Sigma C^\top \right]-A^\top\right\} \zeta^*_k-\Pi C\left(I_n+\Sigma H\right)^{-1}\beta^*_k \\
			&\qquad +\left(M-\Pi\right)\left[BR^{-1}B^\top +C\left(I_n+\Sigma H\right)^{-1}\Sigma C^\top \right]\mathbb{E}\zeta^*_k-Mf-\bar{\eta}_1\\
			&\qquad +\left(\Pi-M\right)C\left(I_n+\Sigma H\right)^{-1}\mathbb{E}\beta^*_k\bigg\}dt\\
			&\quad +\bigg\{-\left(\Pi+H\right)\left(I_n+\Sigma H\right)^{-1}\beta^*_k-\left(I_n-\Pi\Sigma \right)\left(I_n+H\Sigma \right)^{-1} C^\top\\
			&\qquad\ \times\left\{\left(I_n-\Pi\Sigma\right)^{-1}\left[\Sigma\left(\varphi^*_k-\mathbb{E}\varphi^*_k\right)+\left(\zeta^*_k-\mathbb{E}\zeta^*_k\right)\right]\right.\\
            &\qquad\ \left.+\left(I_n-MK\right)^{-1}\left(M\mathbb{E}\varphi^*_k+\mathbb{E}\zeta^*_k\right)\right\}\bigg\}dW_k,\\
			\zeta^*_k(0)&= \bar{\eta}_0.
		\end{aligned}\right.
	\end{equation}
\end{Proposition}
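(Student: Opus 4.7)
The plan is to mirror the argument of Proposition 3.4, adapting only the coefficients that change when passing from the game setting to the social setting; in particular, the terms $(I_n-\Gamma_1/N)^\top Q(I_n-\Gamma_1/N)$ and $(I_n-\Gamma_1/N)^\top Q(I_n-\Gamma_1)$ are replaced by $Q-Q_\Gamma/N$ and $Q-Q_\Gamma$ respectively, and likewise for the $G$-terms, while the inhomogeneous forcing $(I_n-\Gamma_1/N)^\top Q\eta_1$ becomes $\bar\eta_1$ and $(I_n-\Gamma_0/N)^\top G\eta_0$ becomes $\bar\eta_0$. These are exactly the coefficients that appear in the generators and boundary data of the Hamiltonian system (\ref{Hamiltonian system of social change}).

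First, I would substitute the explicit form of $z^*_k$ from the preceding proposition into the drift and diffusion of the forward equation for $x^*_k$, which rewrites that equation as a BSDE whose coefficients depend on $\hat p^*_k$ and on $\beta^*_k$ only. Then I would postulate the ansatz
\begin{equation*}
\hat p^*_k=\Pi\bigl(x^*_k-\mathbb{E}x^*_k\bigr)+M\,\mathbb{E}x^*_k+\zeta^*_k,
\end{equation*}
with $\Pi(\cdot)$, $M(\cdot)$ deterministic and $\zeta^*_k$ governed by an SDE $d\zeta^*_k=\chi^*_k dt+\gamma^*_k dW_k$, $\zeta^*_k(0)=\bar\eta_0$, with $\Pi(0)=-(G-G_\Gamma/N)$ and $M(0)=-(G-G_\Gamma)$ chosen so that the ansatz matches the initial datum of $\hat p^*_k$ in (\ref{Hamiltonian system of social change}). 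Splitting into the fluctuation and mean pieces
\begin{equation*}
\hat p^*_k-\mathbb{E}p^*_k=\Pi\bigl(x^*_k-\mathbb{E}x^*_k\bigr)+\bigl(\zeta^*_k-\mathbb{E}\zeta^*_k\bigr),\qquad \mathbb{E}p^*_k=M\,\mathbb{E}x^*_k+\mathbb{E}\zeta^*_k,
\end{equation*}
allows one to treat the two parts independently.

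Next I would apply It\^o's formula to the fluctuation identity, using the dynamics of $x^*_k$ and the BSDE satisfied by $\hat p^*_k$ in (\ref{Hamiltonian system of social change}). Matching the diffusion coefficients yields an algebraic equation for $\gamma^*_k$, which, after inverting $I_n-\Pi\Sigma$ via the relation between $\hat p^*_k$ and $(x^*_k,\varphi^*_k,\zeta^*_k)$ coming from the first-step ansatz, produces the explicit formula for the diffusion term in (\ref{zeta equation of social}); the identity $(I_n+\Sigma H)^{-1}\Sigma=\Sigma(I_n+H\Sigma)^{-1}$ is what makes the coefficient $(I_n-\Pi\Sigma)(I_n+H\Sigma)^{-1}C^\top$ emerge cleanly. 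Matching the drift of the fluctuation part, and comparing coefficients of $x^*_k-\mathbb{E}x^*_k$, gives the Riccati equation (\ref{Pi equation of social}) for $\Pi$, while the remaining terms determine $\chi^*_k-\mathbb{E}\chi^*_k$.

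Then I would apply It\^o's formula to the mean identity $\mathbb{E}p^*_k=M\,\mathbb{E}x^*_k+\mathbb{E}\zeta^*_k$, using that $d\mathbb{E}x^*_k$ is obtained by taking expectations in the forward equation and substituting $z^*_k$ and $\hat p^*_k$ via the previous ansatz. Matching coefficients of $\mathbb{E}x^*_k$ produces the Riccati equation (\ref{M equation of social}) for $M$, and the leftover identity pins down $\mathbb{E}\chi^*_k$, including the constant forcing $-Mf-\bar\eta_1$. Combining the two expressions for $\chi^*_k-\mathbb{E}\chi^*_k$ and $\mathbb{E}\chi^*_k$ reconstructs $\chi^*_k$ and gives the drift of (\ref{zeta equation of social}). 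The main obstacle is purely bookkeeping: keeping the fluctuation-versus-mean decomposition consistent while substituting the already-decoupled expressions for $z^*_k$ and $x^*_k$ from the first step, and in particular handling the inversion of $I_n-\Pi\Sigma$ (resp.\ $I_n-MK$) so that the diffusion coefficient of $\zeta^*_k$ matches the form written in (\ref{zeta equation of social}); apart from this, the argument is a direct adaptation of the proof of Proposition \ref{decouple proposition 2}.
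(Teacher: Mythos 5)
Your proposal is correct and follows essentially the same route as the paper: the paper states this proposition without a separate proof, relying on the argument of Proposition \ref{decouple proposition 2} with the substitutions $\left(I_n-\frac{\Gamma_1}{N}\right)^\top Q\left(I_n-\frac{\Gamma_1}{N}\right)\to Q-\frac{Q_\Gamma}{N}$, $\left(I_n-\frac{\Gamma_1}{N}\right)^\top Q\left(I_n-\Gamma_1\right)\to Q-Q_\Gamma$, and the corresponding $G$- and $\eta$-replacements, which is exactly the adaptation you describe (ansatz, fluctuation/mean splitting, diffusion matching for $\gamma^*_k$, drift matching for the $\Pi$ and $M$ Riccati equations, then reassembling $\chi^*_k$). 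Your handling of the boundary data $\Pi(0)=-\left(G-\frac{G_\Gamma}{N}\right)$, $M(0)=-\left(G-G_\Gamma\right)$, $\zeta^*_k(0)=\bar{\eta}_0$ and of the inversions of $I_n-\Pi\Sigma$ and $I_n-MK$ matches what the stated equations require.
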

The above discussion can be summarized as the following theorem.
\begin{Theorem}
	Let Assumption (\ref{A1}), (\ref{A2 of social}) hold. Riccati equations (\ref{Sigma equation of social}), (\ref{K equation of social}), (\ref{Pi equation of social}), (\ref{M equation of social}), BSDE (\ref{varphi equation of social}), SDE (\ref{zeta equation of social}) admit unique solutions, respectively. In addition, the optimal strategy of agent $\mathcal{A}_k$ has a feedback form as follows
	\begin{equation}\label{optimal social strategy}
		u^*_k=-R^{-1} B^\top\big[\Pi x^*_k+\left(M-\Pi\right) \mathbb{E}x^*_k+\zeta^*_k\big],
	\end{equation}
	where
	\begin{equation}
		\left\{\begin{aligned}
			d x^*_k&= \left[\left(A-BR^{-1} B^\top \Pi\right) x^*_k-BR^{-1} B^\top \left(M-\Pi\right) \mathbb{E}x^*_k\right.\\
            &\qquad \left.-BR^{-1} B^\top \zeta^*_k+C z^*_k+f\right] d t+z^*_k d W_k, \\
			x^*_k(T)&= \xi_k,
		\end{aligned}\right.
	\end{equation}
	\begin{equation}
		\left\{\begin{aligned}
			d \mathbb{E}x^*_k&= \left\{\left(A-BR^{-1} B^\top M-C \left(I_n+\Sigma H\right)^{-1}\Sigma C^\top M\right) \mathbb{E}x^*_k \right.\\
			&\qquad \left.-\left[BR^{-1} B^\top +C \left(I_n+\Sigma H\right)^{-1}\Sigma C^\top \right]\mathbb{E}\zeta^*_k+C \left(I_n+\Sigma H\right)^{-1} \mathbb{E}\beta^*_k+f\right\} d t, \\
			\mathbb{E}x^*_k(T)&= \mathbb{E}\xi_k.
		\end{aligned}\right.
	\end{equation}
\end{Theorem}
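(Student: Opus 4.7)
The plan is to proceed in three stages that mirror the structure of the analogous theorem at the end of Section 3. First, I would establish well-posedness of the two Riccati equations (\ref{Sigma equation of social})--(\ref{K equation of social}) and (\ref{Pi equation of social})--(\ref{M equation of social}). Equation (\ref{Sigma equation of social}) has terminal value $0$ and runs backward with the quadratic $-\Sigma(Q-Q_\Gamma/N)\Sigma$ term; since Assumption (\ref{A2 of social}) gives $Q\geqslant 0$, $Q_\Gamma\leqslant 0$, $H\geqslant 0$ and $R>0$, the standard theory (e.g.\ \cite{Li-Sun-Xiong-2019}) applies, and the same argument handles $K(\cdot)$ because $Q-Q_\Gamma\geqslant 0$. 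For (\ref{Pi equation of social}) and (\ref{M equation of social}), which run forward from negative semidefinite initial data, I would appeal to \cite{Lim-Zhou-2001} after noting that $G-G_\Gamma/N\geqslant 0$ and $G-G_\Gamma\geqslant 0$ follow from $G\geqslant 0$ and $G_\Gamma\leqslant 0$; once $\Sigma$ is in hand, the coefficient $BR^{-1}B^\top+C(I_n+\Sigma H)^{-1}\Sigma C^\top$ is bounded and symmetric positive semidefinite, so these are standard backward-type Riccati equations in disguise.

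Second, with the four Riccati solutions fixed, BSDE (\ref{varphi equation of social}) is a linear BSDE with bounded coefficients and square-integrable data (the terminal $\xi_k$ and the forcing terms involving $f,\eta_1$), hence admits a unique adapted pair $(\varphi^*_k,\beta^*_k)$ by classical BSDE theory. Feeding $(\varphi^*_k,\beta^*_k)$, together with $\mathbb{E}\varphi^*_k$ and $\mathbb{E}\beta^*_k$ obtained by taking expectations in (\ref{varphi equation of social}), into SDE (\ref{zeta equation of social}) yields a linear SDE whose coefficients are bounded and whose forcing is square integrable, so (\ref{zeta equation of social}) also has a unique strong solution $\zeta^*_k$.

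Third, I would derive the feedback form (\ref{optimal social strategy}). From the optimality condition (\ref{optimality conditions of social}) one has $u^*_k=-R^{-1}B^\top\hat p^*_k$, and the decoupling relation (\ref{decouple form of social 4}) of Proposition \ref{decouple proposition 4} immediately gives
\begin{equation*}
u^*_k=-R^{-1}B^\top\bigl[\Pi(x^*_k-\mathbb{E}x^*_k)+M\mathbb{E}x^*_k+\zeta^*_k\bigr]=-R^{-1}B^\top\bigl[\Pi x^*_k+(M-\Pi)\mathbb{E}x^*_k+\zeta^*_k\bigr],
\end{equation*}
which is (\ref{optimal social strategy}). Substituting this expression for $u^*_k$ into the forward equation in (\ref{Hamiltonian system of social change}) produces the stated closed-loop BSDE for $x^*_k$. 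Finally, taking expectations and using $\mathbb{E}[z^*_k\,dW_k]=0$ together with the decoupling identity $\mathbb{E}p^*_k=M\mathbb{E}x^*_k+\mathbb{E}\zeta^*_k$ and the explicit form of $z^*_k$, the drift collapses to the one displayed in the $\mathbb{E}x^*_k$-equation.

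The main obstacle is the well-posedness of the nonsymmetric Riccati equations (\ref{Pi equation of social}) and (\ref{M equation of social}): unlike the games case, the sign conditions here rely crucially on the new hypotheses $Q_\Gamma\leqslant 0$ and $G_\Gamma\leqslant 0$ in Assumption (\ref{A2 of social}), which guarantee that the zero-order coefficients $Q-Q_\Gamma/N$, $Q-Q_\Gamma$ and the initial data $-(G-G_\Gamma/N)$, $-(G-G_\Gamma)$ have the correct sign to fall within the scope of \cite{Lim-Zhou-2001}. Everything else is a bookkeeping exercise once the decoupling Propositions of Section 4 are in place.
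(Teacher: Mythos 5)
Your proposal is correct and follows essentially the same route as the paper, which simply summarizes the preceding discussion: well-posedness of the Riccati equations via the cited results (\cite{Li-Sun-Xiong-2019}, \cite{Lim-Zhou-2001}) under the sign conditions $Q_\Gamma\leqslant 0$, $G_\Gamma\leqslant 0$, solvability of the linear BSDE (\ref{varphi equation of social}) and SDE (\ref{zeta equation of social}) once the Riccati solutions are fixed, and then the feedback form obtained by inserting the decoupling relation (\ref{decouple form of social 4}) into the stationarity condition (\ref{optimality conditions of social}) and taking expectations for the $\mathbb{E}x^*_k$-dynamics. No substantive differences or gaps.
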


\subsection{Comparison with classical mean field social optima problem}

We now review the classical results of mean field team problem for comparison to this work. Consider the large-population of Problem (\ref{social problem}) case with $C(\cdot)=0$. Then, let $N$ intends to infinity, we introduce the following equation 
\begin{equation}\label{Sigma equation of social limit}
	\left\{\begin{aligned}
		&\dot{\bar{\Sigma}}-A \bar{\Sigma}-\bar{\Sigma} A^\top -\bar{\Sigma}Q \bar{\Sigma}+B R^{-1} B^\top=0, \\
		&\bar{\Sigma}(T)=0,
	\end{aligned}\right.
\end{equation}

\begin{equation}\label{K equation of social limit}
	\left\{\begin{aligned}
		& \dot{\bar{K}}-A \bar{K}-\bar{K} A^\top -\bar{K}\left(Q-Q_{\Gamma}\right) \bar{K}+B R^{-1} B^\top=0, \\
		& \bar{K}(T)=0,
	\end{aligned}\right.
\end{equation}

\begin{equation}\label{varphi equation of social limit}
	\left\{\begin{aligned}
		d\bar{\varphi}^*_k&= \left\{\left(A+\bar{\Sigma}Q\right) \bar{\varphi}^*_k -\left[\left(\bar{\Sigma}-\bar{K}\right)Q-\bar{K}Q_{\Gamma}\right]\mathbb{E}\bar{\varphi}^*_k -\bar{K} \bar{\eta}_1+f\right\}dt+ \bar{\beta}^*_k d W_k,\\
		\bar{\varphi}^*_k(T)&= \xi_k,
	\end{aligned}\right.
\end{equation}
\begin{equation}\label{Pi equation of social limit}
	\left\{\begin{aligned}
		&\dot{\bar{\Pi}}+\bar{\Pi} A^\top +A \bar{\Pi}-\bar{\Pi}B R^{-1} B^\top \bar{\Pi}+Q=0, \\
		&\bar{\Pi}(0)=-G,
	\end{aligned}\right.
\end{equation}
\begin{equation}\label{M equation of social limit}
	\left\{\begin{aligned}
		&\dot{\bar{M}}+\bar{M} A^\top +A \bar{M}-\bar{M}BR^{-1}B^\top \bar{M}+ \left(Q-Q_{\Gamma}\right)=0,\\
		&\bar{M}(0)=-\left(G-G_{\Gamma}\right),
	\end{aligned}\right.
\end{equation}
\begin{equation}\label{zeta equation of social limit}
	\left\{\begin{aligned}
		d \bar{\zeta}^*_k&= \left[\left(\bar{\Pi}BR^{-1}B^\top -A^\top \right) \bar{\zeta}^*_k+\left(\bar{M}-\bar{\Pi}\right)BR^{-1}B^\top \mathbb{E}\bar{\zeta}^*_k-\bar{M}f-\bar{\eta}_1\right]dt\\
		&\qquad -\left[\left(\bar{\Pi}+H\right)\left(I_n+\bar{\Sigma}H\right)^{-1}\bar{\beta}^*_k\right]dW_k,\\
		\bar{\zeta}^*_k(0)&= \bar{\eta}_0.
	\end{aligned}\right.
\end{equation}

And we have the following result.
\begin{Proposition}\label{Proposition estimation of social Sigma limit}
	Let Assumption (\ref{A1}), (\ref{A2 of social}) hold and $C(\cdot)=0$, then we have $K(\cdot)=\bar{K}(\cdot)$, $M(\cdot)=\bar{M}(\cdot)$ and
	\begin{equation}\label{estimation of social Riccati limit}
	\sup_{0\leqslant t \leqslant T}\left\|\Sigma(t)-\bar{\Sigma}(t)\right\|=O\left(\frac{1}{N}\right),\quad \sup_{0\leqslant t \leqslant T}\left\|\Pi(t)-\bar{\Pi}(t)\right\|=O\left(\frac{1}{N}\right).
    \end{equation} 
\end{Proposition}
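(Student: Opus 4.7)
The plan is to directly compare each Riccati equation to its limiting counterpart, identify identities versus $O(1/N)$ perturbations, and close with a Gr\"onwall argument.

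First I would handle the two exact identities. Setting $C(\cdot)=0$ in the equation (\ref{K equation of social}) for $K$ gives exactly equation (\ref{K equation of social limit}) for $\bar K$, with the same terminal condition $K(T)=\bar K(T)=0$. By uniqueness of solutions to this symmetric Riccati equation (invoked in the remark after Proposition \ref{decouple proposition 4}), we conclude $K\equiv\bar K$. The same direct comparison applied to (\ref{M equation of social}) versus (\ref{M equation of social limit}), using the initial conditions $M(0)=-(G-G_\Gamma)=\bar M(0)$, yields $M\equiv\bar M$. Note that this step uses crucially that the $1/N$ corrections never appear in these two equations; the perturbations are confined to $\Sigma$ and $\Pi$.

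Next I turn to $\Sigma-\bar\Sigma$. Setting $\Delta\Sigma:=\Sigma-\bar\Sigma$ and subtracting (\ref{Sigma equation of social limit}) from (\ref{Sigma equation of social}) with $C=0$, the quadratic term rearranges as
\begin{equation*}
\Sigma\bigl(Q-\tfrac{Q_\Gamma}{N}\bigr)\Sigma-\bar\Sigma Q\bar\Sigma
=\Sigma Q\,\Delta\Sigma+\Delta\Sigma\,Q\bar\Sigma-\tfrac{1}{N}\Sigma Q_\Gamma\Sigma,
\end{equation*}
so $\Delta\Sigma$ solves a linear matrix ODE of the form $\dot{\Delta\Sigma}=\mathcal{L}_t(\Delta\Sigma)+\tfrac{1}{N}\Sigma Q_\Gamma\Sigma$ with $\Delta\Sigma(T)=0$, where $\mathcal{L}_t$ is a bounded linear operator built from $A,Q,\Sigma,\bar\Sigma$. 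Since the existence/uniqueness result quoted after Proposition 3.2 guarantees that $\Sigma$ and $\bar\Sigma$ are uniformly bounded on $[0,T]$, and since $Q_\Gamma\in L^\infty$, the forcing term is uniformly $O(1/N)$. Gr\"onwall's inequality applied backwards from $t=T$ then yields $\sup_{t\in[0,T]}\|\Delta\Sigma(t)\|=O(1/N)$.

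The treatment of $\Delta\Pi:=\Pi-\bar\Pi$ is analogous. Subtracting (\ref{Pi equation of social limit}) from (\ref{Pi equation of social}) with $C=0$ and using the factorization
\begin{equation*}
\Pi BR^{-1}B^\top\Pi-\bar\Pi BR^{-1}B^\top\bar\Pi
=\Pi BR^{-1}B^\top\Delta\Pi+\Delta\Pi\,BR^{-1}B^\top\bar\Pi,
\end{equation*}
gives a linear ODE for $\Delta\Pi$ with forward initial condition $\Delta\Pi(0)=G_\Gamma/N$ and inhomogeneous term $-Q_\Gamma/N$, both of order $O(1/N)$. The uniform boundedness of $\Pi,\bar\Pi$ (again from the Riccati existence theory cited in the paper) makes the coefficients bounded, and a forward Gr\"onwall argument delivers $\sup_{t\in[0,T]}\|\Delta\Pi(t)\|=O(1/N)$.

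The only real obstacle is the a priori uniform boundedness of $\Sigma$ and $\Pi$ that needs to hold independently of $N$; this follows because the $1/N$ perturbations in the quadratic coefficients $Q-Q_\Gamma/N$ and $Q-\bar{Q}_\Gamma$ preserve the sign conditions in Assumption \ref{A2 of social} (in particular $Q-Q_\Gamma/N\geqslant Q\geqslant 0$ since $Q_\Gamma\leqslant 0$), so the standard comparison/monotonicity arguments for symmetric Riccati equations give $N$-uniform bounds. Once that is in hand, the two Gr\"onwall estimates combine to give (\ref{estimation of social Riccati limit}).
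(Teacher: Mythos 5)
Your argument is correct, and it treats the identities $K\equiv\bar K$, $M\equiv\bar M$ exactly as the paper does (once $C=0$, the $1/N$ terms never appear in \eqref{K equation of social} or \eqref{M equation of social}, so uniqueness gives the identities). For the two $O(1/N)$ estimates, though, you take a more self-contained route: the paper simply invokes Theorem 4 of \cite{Huang-Zhou-2020} — a general perturbation/asymptotic-solvability result for Riccati ODEs — with $\delta_N=\int_0^T\big\|\Sigma\big(Q-\tfrac{Q_\Gamma}{N}\big)\Sigma\big\|\,dt$ for $\Sigma-\bar\Sigma$ and with $z_N=\tfrac{G_\Gamma}{N}-G$, $z=-G$, $\delta_N=\int_0^T\big\|Q-\tfrac{Q_\Gamma}{N}\big\|\,dt$ for $\Pi-\bar\Pi$, whereas you subtract \eqref{Sigma equation of social} from \eqref{Sigma equation of social limit} (resp. \eqref{Pi equation of social} from \eqref{Pi equation of social limit}), factor the quadratic difference, and close with a backward (resp. forward) Gr\"onwall estimate on a linear matrix ODE with $O(1/N)$ forcing and, for $\Pi$, the $O(1/N)$ initial datum $G_\Gamma/N$. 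You are also right that the only genuine prerequisite is $N$-uniform boundedness of $\Sigma$ and $\Pi$, and your comparison argument does supply it: since $Q_\Gamma\leqslant 0$, $G_\Gamma\leqslant 0$, the weights $Q-\tfrac{Q_\Gamma}{N}$ and $G-\tfrac{G_\Gamma}{N}$ are sandwiched between the $N$-independent pairs $(Q,G)$ and $(Q-Q_\Gamma,G-G_\Gamma)$, so monotone dependence of the sign-definite Riccati solutions on the weights yields bounds independent of $N$ — this is precisely what the paper instead imports from the cited theorem. Your version buys an elementary, explicit argument with a visible Gr\"onwall constant; the paper's citation buys brevity. Two cosmetic slips, neither affecting the estimate: the forcing in your $\Delta\Sigma$ equation should be $-\tfrac{1}{N}\Sigma Q_\Gamma\Sigma$ (sign), and the uniqueness of $K,M$ in the social setting is stated in the summarizing theorem of Section 4 rather than in a remark after Proposition \ref{decouple proposition 4}.
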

\begin{proof}
	Combining (\ref{K equation of social}) with (\ref{K equation of social limit}), $K(\cdot)=\bar{K}(\cdot)$, $M(\cdot)=\bar{M}(\cdot)$ is obvious. Using the Theorem 4 in \cite{Huang-Zhou-2020}, let $\delta_N=\int_{0}^{T}\left\|\Sigma \left(Q-\frac{Q_\Gamma}{N}\right)\Sigma \right\|dt$, we have the first estimation of (\ref{estimation of social Riccati limit}). Similarly,  $z_N=\frac{G_\Gamma}{N}-G,z=-G$ and $\delta_N=\int_{0}^{T}\left\|Q-\frac{Q_\Gamma}{N}\right\|dt$, we have the second estimation of (\ref{estimation of social Riccati limit}). 
\end{proof}
\begin{Proposition}\label{Proposition estimation of social varphi limit}
	Let Assumption (\ref{A1}), (\ref{A2 of social}) hold and $C(\cdot)=0$, then we have 
	\begin{equation}\label{estimation of social varphi limit}
		\begin{aligned}
		&\mathbb{E}\int_0^T\left\|\varphi^*_k-\bar{\varphi}^*_k\right\|^2dt=O\left(\frac{1}{N^2}\right),\\
		&\mathbb{E}\int_0^T\left\|\beta^*_k-\bar{\beta}^*_k\right\|^2dt=O\left(\frac{1}{N^2}\right),\\
		&\mathbb{E}\int_0^T\left\|\zeta^*_k-\bar{\zeta}^*_k\right\|^2dt=O\left(\frac{1}{N}\right).
	\end{aligned}
	\end{equation} 
\end{Proposition}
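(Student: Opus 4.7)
The plan is to derive each estimate by writing a difference equation whose coefficients mirror the ``pre-limit'' system but whose inhomogeneous (source) terms are controlled by the Riccati discrepancies already quantified in Proposition \ref{Proposition estimation of social Sigma limit}. With $C(\cdot)=0$ the coupling between the BSDE for $(\varphi^*_k,\beta^*_k)$ and the SDE for $\zeta^*_k$ is one-way, so I would first handle the $\varphi$ and $\beta$ bounds and then plug the resulting $L^2$ estimate of $\tilde{\beta}:=\beta^*_k-\bar{\beta}^*_k$ into the $\zeta$ analysis. A convenient book-keeping device is to use $K\equiv\bar{K}$, $M\equiv\bar{M}$ and write every coefficient of the limiting system as ``pre-limit coefficient plus $O(1/N)$ perturbation'', using $\sup_{t}\|\Sigma-\bar\Sigma\|+\sup_t\|\Pi-\bar\Pi\|=O(1/N)$ and $\|Q_\Gamma/N\|=O(1/N)$.

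For the first two estimates, set $\tilde{\varphi}=\varphi^*_k-\bar{\varphi}^*_k$, $\tilde{\beta}=\beta^*_k-\bar{\beta}^*_k$. Subtracting (\ref{varphi equation of social limit}) from (\ref{varphi equation of social}) (with $C=0$) yields a linear BSDE
\[
-d\tilde{\varphi}=\bigl[\text{linear in }\tilde{\varphi},\ \mathbb{E}\tilde{\varphi}\bigr]\,dt+\bigl[\text{source}\bigr]\,dt-\tilde{\beta}\,dW_k,\qquad \tilde\varphi(T)=0,
\]
where the source consists of $\Sigma(Q-Q_\Gamma/N)\bar{\varphi}^*_k-\bar{\Sigma}Q\bar{\varphi}^*_k$ and an analogous mean-field term, each of which is a product of an $O(1/N)$ coefficient (either $\Sigma-\bar\Sigma$ or $Q_\Gamma/N$) with the limit process $\bar{\varphi}^*_k$. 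Since Assumption \ref{A2 of social} gives bounded coefficients and $\mathbb{E}\int_0^T|\bar{\varphi}^*_k|^2dt<\infty$, the standard $L^2$-stability estimate for linear BSDEs (Ito's formula applied to $|\tilde{\varphi}|^2$ followed by Gronwall and BDG) produces $\mathbb{E}\int_0^T(|\tilde{\varphi}|^2+|\tilde{\beta}|^2)dt=O(1/N^2)$, giving both stated bounds simultaneously.

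For the $\zeta$ estimate, put $\tilde{\zeta}=\zeta^*_k-\bar{\zeta}^*_k$, which satisfies an SDE with zero initial condition and coefficients obtained by subtracting (\ref{zeta equation of social limit}) from (\ref{zeta equation of social}) with $C=0$. The drift decomposes into a Lipschitz part $(\Pi BR^{-1}B^\top-A^\top)\tilde{\zeta}+(\bar{M}-\bar{\Pi})BR^{-1}B^\top\mathbb{E}\tilde{\zeta}$ plus a source term of the form $\tilde{\Pi}BR^{-1}B^\top\bar{\zeta}^*_k+(\bar{M}-\bar\Pi-(M-\Pi))BR^{-1}B^\top\mathbb{E}\zeta^*_k$, which is pointwise $O(1/N)$. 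The diffusion difference is $-(\Pi+H)(I_n+\Sigma H)^{-1}\tilde{\beta}-\bigl[(\Pi+H)(I_n+\Sigma H)^{-1}-(\bar{\Pi}+H)(I_n+\bar\Sigma H)^{-1}\bigr]\bar{\beta}^*_k$; the first factor contributes $O(|\tilde\beta|)$ and, by the resolvent identity plus Proposition \ref{Proposition estimation of social Sigma limit}, the second factor contributes $O(|\bar{\beta}^*_k|/N)$. Applying Ito to $|\tilde\zeta|^2$, taking expectations, and bounding via BDG on the martingale part, the right-hand side picks up $\mathbb{E}\int_0^T|\tilde\beta|^2dt=O(1/N^2)$ from the first estimate together with $\tfrac{1}{N^2}\mathbb{E}\int_0^T|\bar{\beta}^*_k|^2dt=O(1/N^2)$ and the drift source $O(1/N^2)$. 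A Gronwall argument then yields $\mathbb{E}\int_0^T|\tilde\zeta|^2dt=O(1/N^2)$, which in particular implies the stated $O(1/N)$ bound.

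The main obstacle I anticipate is keeping track of all cross terms cleanly when subtracting the two $\zeta$-equations, in particular recognizing that the $(M-\Pi)\,\mathbb{E}\zeta-(\bar M-\bar\Pi)\mathbb{E}\bar\zeta$ combination must be rearranged as $(\bar M-\bar\Pi)\mathbb{E}\tilde\zeta-\tilde\Pi\,\mathbb{E}\zeta^*_k$ so that the Lipschitz and $O(1/N)$ source parts become visibly separate. A secondary issue is that $\bar\beta^*_k$ is only square-integrable, not bounded; this is the reason $\bar\beta^*_k$ must appear only inside an $L^2_{t,\omega}$ integral when bounding the diffusion contribution, which is exactly what Ito's isometry provides.
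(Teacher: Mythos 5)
Your proposal follows essentially the same route as the paper: subtract the pre-limit and limit equations, treat the Riccati discrepancies $\Sigma-\bar\Sigma$, $\Pi-\bar\Pi$ (together with $Q_\Gamma/N$) from Proposition \ref{Proposition estimation of social Sigma limit} as $O(1/N)$ source terms multiplying square-integrable processes, and apply the standard $L^2$ estimates for linear BSDEs and SDEs (plus the resolvent identity for the diffusion coefficient in the $\zeta$-equation). The only differences are cosmetic — you anchor the sources on the limit processes $\bar{\varphi}^*_k,\bar{\zeta}^*_k$ rather than on $\varphi^*_k,\zeta^*_k$, and like the paper you actually obtain $O(1/N^2)$ for the $\zeta$-difference, which implies the stated $O(1/N)$ bound.
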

\begin{proof}
	By (\ref{varphi equation of social}) and (\ref{varphi equation of social limit}), we have 
	$$
	\left\{\begin{aligned}
		d\left(\varphi^*_k-\bar{\varphi}^*_k\right)&= \left[\left(\Sigma-\bar{\Sigma}\right)Q\varphi^*_k+\left(A+\Sigma Q\right)\left(\varphi^*_k-\bar{\varphi}^*_k\right)-\Sigma\frac{Q_\Gamma}{N}\varphi^*_k+\frac{\Sigma}{N}Q_\Gamma\mathbb{E}\varphi^*_k\right]dt\\
        &\qquad +\left(\beta^*_k-\bar{\beta}^*_k\right)dW_k, \\
		\left(\varphi^*_k-\bar{\varphi}^*_k\right)(T)&= 0.
	\end{aligned}\right.
	$$
	Applying the standard estimate of BSDE (\cite{Li-Sun-Xiong-2019}), we derive
	$$
	\begin{aligned}
	\mathbb{E}&\int_0^T\left\|\varphi^*_k-\bar{\varphi}^*_k\right\|^2dt+\mathbb{E}\int_0^T\left\|\beta^*_k-\bar{\beta}^*_k\right\|^2dt\\
    &\leqslant C\left\{ \mathbb{E}\int_0^T\left\|\left(\Sigma\bar{\Sigma}\right)Q\varphi^*_k\right\|^2dt
    +\mathbb{E}\int_0^T\left\|\Sigma\frac{Q_\Gamma}{N}\varphi^*_k\right\|^2dt +\mathbb{E}\int_0^T\left\|\frac{\Sigma}{N}Q_\Gamma\mathbb{E}\varphi^*_k\right\|^2dt\right\}\\
	&\leqslant C\left\{ \left[\left(\sup_{0\leqslant t \leqslant T}\left\|\Sigma-\bar{\Sigma}\right\|\right)^2+\left(\sup_{0\leqslant t \leqslant T}\left\|\frac{\Sigma}{N}\right\|\right)^2 \right]\left(\mathbb{E}\int_0^T\left\|\varphi^*_k\right\|^2dt\right)\right.\\
	&\qquad\quad \left.+\left(\sup_{0\leqslant t \leqslant T}\left\|\frac{\Sigma}{N}\right\|\right)^2\mathbb{E}\int_0^T\left\|\mathbb{E}\varphi^*_k\right\|^2dt\right\}=O\left(\frac{1}{N^2}\right).
    \end{aligned}
	$$
	Therefore, we get the first and second estimation of (\ref{estimation of social varphi limit}).
	By (\ref{zeta equation of social}) and (\ref{zeta equation of social limit}), we have 
	$$
	\left\{\begin{aligned}
		d\left(\zeta^*_k-\bar{\zeta}^*_k\right)&= \left[\left(\Pi-\bar{\Pi}\right)BR^{-1}B^\top \zeta^*_k+\left(\bar{\Pi}BR^{-1}B^\top -A^\top\right)\left(\zeta^*_k-\bar{\zeta}^*_k\right)\right.\\
		&\qquad \left. +\left(-\Pi+\bar{\Pi}\right)BR^{-1}B^\top \mathbb{E} \zeta^*_k+\left(\bar{M}-\bar{\Pi}\right)BR^{-1}B^\top \left(\mathbb{E} \zeta^*_k-\mathbb{E} \bar{\zeta}^*_k\right) \right]dt\\
		&\quad -\left\{\left[\left(\Pi-H\right)\left(I_n+\Pi H\right)^{-1}-\left(\bar{\Pi}-H\right)\left(I_n+\bar{\Pi} H\right)^{-1}\right]\beta^*_k \right.\\
		&\qquad\ \left. +\left(\bar{\Pi}-H\right)\left(I_n+\bar{\Pi} H\right)^{-1}\left(\beta^*_k-\bar{\beta}^*_k\right)\right\}dW_k, \\
		\left(\zeta^*_k-\bar{\zeta}^*_k\right)(0)&= 0.
	\end{aligned}\right.
	$$
	Due to 
	$$
	\begin{aligned}
	&\left(I_n+\bar{\Pi} H\right)\left[\left(I_n+\Pi H\right)^{-1}-\left(I_n+\bar{\Pi} H\right)^{-1}\right]\\
	&=\left(I_n+\bar{\Pi} H\right)\left(I_n+\Pi H\right)^{-1}-I_n
	=\left(\bar{\Pi}-\Pi\right)H\left(I_n+\Pi H\right)^{-1},
	\end{aligned}
	$$
	we have
	\begin{equation}\label{fact}
		\left(I_n+\Pi H\right)^{-1}-\left(I_n+\bar{\Pi} H\right)^{-1}=\left(I_n+\bar{\Pi} H\right)^{-1}\left(\bar{\Pi}-\Pi\right)H\left(I_n+\Pi H\right)^{-1}
	\end{equation}
	Applying the standard estimate of SDE and the fact (\ref{fact}), we derive
	$$
	\begin{aligned}
		\mathbb{E}\int_0^T\left\|\zeta^*_k-\bar{\zeta}^*_k\right\|^2dt &\leqslant C \left\{ \mathbb{E}\int_0^T\left\|\left(\Pi-\bar{\Pi}\right)BR^{-1}B^\top \zeta^*_k+\left(\bar{\Pi}-\Pi\right)BR^{-1}B^\top \mathbb{E}\zeta^*_k\right\|^2dt \right.\\
		&\qquad\ +\mathbb{E}\int_0^T\left\|\left\{\left(\Pi-H\right)\left[\left(I_n+\Pi H\right)^{-1}-\left(I_n+\bar{\Pi} H\right)^{-1}\right]\right.\right.\\
        &\qquad\ \left.\left.\left.+\left(\Pi-\bar{\Pi}\right)\left(I_n+\bar{\Pi} H\right)^{-1}\right\}\beta^*_k \left(\bar{\Pi}-H\right)\left(I_n+\bar{\Pi} H\right)^{-1}\left(\beta^*_k-\bar{\beta}^*_k\right)\right\|^2dt\right\}\\
		&\leqslant C\left\{\left(\sup_{0\leqslant t \leqslant T}\left\|\Pi-\bar{\Pi}\right\|\right)^2\left(\mathbb{E}\int_0^T\left\|\beta^*_k\right\|^2dt\right) +\mathbb{E}\int_0^T\left\|\beta^*_k-\bar{\beta}^*_k\right\|^2dt\right\}\\
        &= O\left(\frac{1}{N^2}\right).
	\end{aligned}
	$$
	Therefore, we get the third estimation of (\ref{estimation of social varphi limit}). Then, we complete the proof.
\end{proof}
Then, let $N$ intends to infinity, we introduce the following equation 
\begin{equation}
	\left\{\begin{aligned}
		d \bar{x}^*_k&= \left[\left(A-BR^{-1} B^\top \bar{\Pi}\right) \bar{x}^*_k-BR^{-1} B^\top \left(\bar{M}-\bar{\Pi}\right) \mathbb{E}\bar{x}^*_k-BR^{-1} B^\top \bar{\zeta}^*_k+f\right] d t\\
		&\quad +\bar{z}^*_k d W_k, \\
		\bar{x}^*_k(T)&= \xi_k,
	\end{aligned}\right.
\end{equation}
and have the following result whose proof is similar to that of Proposition \ref{Proposition estimation of social varphi limit}.
\begin{Proposition}\label{Proposition estimation of social x limit}
	Let Assumption (\ref{A1}), (\ref{A2 of social}) hold and $C(\cdot)=0$, then we have 
	\begin{equation}\label{estimation of social x limit}
		\begin{aligned}
			&\mathbb{E}\int_{0}^{T}\left\|\bar{x}^*_k-x^*_k\right\|^2dt=O\left(\frac{1}{N^2}\right),\\
			&\mathbb{E}\int_{0}^{T}\left\|\bar{z}^*_k-z^*_k\right\|^2dt=O\left(\frac{1}{N^2}\right),\\
		\end{aligned}
	\end{equation} 
\end{Proposition}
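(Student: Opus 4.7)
The plan is to mirror the approach of Proposition \ref{Proposition estimation of social varphi limit}: derive the BSDE satisfied by the difference pair $(\bar{x}^*_k-x^*_k,\bar{z}^*_k-z^*_k)$, decompose its driver into a part that is linear in the unknowns plus source terms whose $L^2$-norms are already controlled by Propositions \ref{Proposition estimation of social Sigma limit} and \ref{Proposition estimation of social varphi limit}, and then invoke the standard a priori estimate for linear BSDEs.

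First, I would subtract the feedback-form BSDE for $x^*_k$ (with $C(\cdot)=0$) from that for $\bar{x}^*_k$ and use the identity $M(\cdot)=\bar{M}(\cdot)$ from Proposition \ref{Proposition estimation of social Sigma limit} to organise the drift. After rewriting $\bar{\Pi}(\bar{x}^*_k-\mathbb{E}\bar{x}^*_k)-\Pi(x^*_k-\mathbb{E}x^*_k)=\bar{\Pi}\bigl[(\bar{x}^*_k-x^*_k)-(\mathbb{E}\bar{x}^*_k-\mathbb{E}x^*_k)\bigr]+(\bar{\Pi}-\Pi)(x^*_k-\mathbb{E}x^*_k)$, the difference satisfies
\begin{equation*}
\left\{\begin{aligned}
d(\bar{x}^*_k-x^*_k)&=\Big[(A-BR^{-1}B^\top\bar{\Pi})(\bar{x}^*_k-x^*_k)-BR^{-1}B^\top(M-\bar{\Pi})(\mathbb{E}\bar{x}^*_k-\mathbb{E}x^*_k)\\
&\qquad+BR^{-1}B^\top(\Pi-\bar{\Pi})(x^*_k-\mathbb{E}x^*_k)-BR^{-1}B^\top(\bar{\zeta}^*_k-\zeta^*_k)\Big]dt\\
&\quad+(\bar{z}^*_k-z^*_k)dW_k,\\
(\bar{x}^*_k-x^*_k)(T)&=0.
\end{aligned}\right.
\end{equation*}
The first two bracketed terms are linear in the unknowns with coefficients uniformly bounded in $t$ and $N$, while the remaining two are the source terms.

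Next, the standard $L^2$-estimate for linear BSDEs (with the mean-field term handled via Jensen's inequality $\|\mathbb{E}(\bar{x}^*_k-x^*_k)\|^2\leqslant\mathbb{E}\|\bar{x}^*_k-x^*_k\|^2$ before closing the Gr\"onwall loop) yields
\begin{equation*}
\begin{aligned}
&\mathbb{E}\int_0^T\|\bar{x}^*_k-x^*_k\|^2dt+\mathbb{E}\int_0^T\|\bar{z}^*_k-z^*_k\|^2dt\\
&\qquad\leqslant C\left\{\Big(\sup_{0\leqslant t\leqslant T}\|\Pi-\bar{\Pi}\|\Big)^2\mathbb{E}\int_0^T\|x^*_k\|^2dt+\mathbb{E}\int_0^T\|\bar{\zeta}^*_k-\zeta^*_k\|^2dt\right\}.
\end{aligned}
\end{equation*}
The first factor is $O(1/N^2)$ by Proposition \ref{Proposition estimation of social Sigma limit}, while a uniform-in-$N$ bound on $\mathbb{E}\int_0^T\|x^*_k\|^2dt$ is obtained from the decoupling relation $x^*_k=\Sigma(\hat p^*_k-\mathbb{E}p^*_k)+K\mathbb{E}p^*_k+\varphi^*_k$ together with the uniform boundedness of $\Sigma$, $K$ and the a priori BSDE estimate for $\varphi^*_k$; the second source term is $O(1/N^2)$ by the bound on $\zeta^*_k-\bar{\zeta}^*_k$ derived in the proof of Proposition \ref{Proposition estimation of social varphi limit}. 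Since the BSDE estimate controls the $Y$- and $Z$-components simultaneously, both assertions in \eqref{estimation of social x limit} follow at once.

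The main obstacle I anticipate is the coupling introduced by the $\mathbb{E}\bar{x}^*_k-\mathbb{E}x^*_k$ term in the driver, which prevents a direct pathwise Lipschitz argument. This is bypassed by working at the level of $L^2$-norms and applying Jensen as above, so the mean-field dependence can be absorbed into an effective Lipschitz constant because the coefficient $BR^{-1}B^\top(M-\bar{\Pi})$ is uniformly bounded. Once this is handled, the remainder of the argument is a routine application of the BSDE estimate and the earlier propositions.
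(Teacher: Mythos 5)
Your proposal is correct and follows essentially the same route as the paper, which omits the details and simply states that the proof is analogous to that of Proposition \ref{Proposition estimation of social varphi limit}: you form the BSDE for the difference $(\bar{x}^*_k-x^*_k,\bar{z}^*_k-z^*_k)$ with zero terminal value, isolate the source terms $(\Pi-\bar{\Pi})(x^*_k-\mathbb{E}x^*_k)$ and $\bar{\zeta}^*_k-\zeta^*_k$, and close with the standard $L^2$-estimate together with Propositions \ref{Proposition estimation of social Sigma limit} and \ref{Proposition estimation of social varphi limit}. Your use of the $O(1/N^2)$ bound on $\mathbb{E}\int_0^T\|\zeta^*_k-\bar{\zeta}^*_k\|^2dt$ actually established in the proof of Proposition \ref{Proposition estimation of social varphi limit} (rather than the weaker $O(1/N)$ stated there) is exactly what is needed to reach the claimed $O(1/N^2)$ rate.
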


Then, we obtain the decentralized control law for agent $\mathcal{A}_k$ of our large-population system:
\begin{equation}\label{optimal social limit strategy}
	\bar{u}^*_k=-R^{-1} B^\top \left[\bar{\Pi} \bar{x}^*_k+\left(\bar{M}-\bar{\Pi}\right) \mathbb{E}\bar{x}^*_k+\bar{\zeta}^*_k\right].
\end{equation}

By the direct approach \cite{Li-Wu-2023}, the following decentralized strategies of Problem \ref{social problem} with $C(\cdot)=0$ are obtained:
\begin{equation}\label{optimal social limit strategy LiMZ}
	\widetilde{u}^*_k=-R^{-1} B^\top \left[\bar{\Pi} \bar{x}^*_k+\widetilde{M} \mathbb{E}\bar{x}^*_k+\bar{\zeta}^*_k\right],
\end{equation}
where $\widetilde{M}(\cdot)$ satisfies
\begin{equation}\label{M widetilde equation of social limit}
	\left\{\begin{aligned}
		&\dot{\widetilde{M}}+\widetilde{M} A^\top +A \widetilde{M}-\widetilde{M}BR^{-1}B^\top \bar{\Pi}-\bar{\Pi}BR^{-1}B^\top \widetilde{M}-\widetilde{M}BR^{-1}B^\top \widetilde{M}-Q_{\Gamma}=0,\\
		&\bar{M}(0)=G_{\Gamma}.
	\end{aligned}\right.
\end{equation}
Such set of decentralized strategies (\ref{optimal social limit strategy LiMZ}) is further shown to be an $\epsilon$-Nash equilibrium with respect to $\mathcal{U}_k^{c}$, i.e., 
$$
\mathcal{J}_k\left(\widetilde{u}^*_1(\cdot),\cdots,\widetilde{u}^*_N(\cdot)\right) \leqslant \inf _{u_k(\cdot) \in \mathcal{U}_{k}^c}\mathcal{J}_k\left(u_1(\cdot),\cdots,u_N(\cdot)\right)+\varepsilon,
$$
where $\mathcal{U}^{c}_k= \bigg\{u_k(\cdot) \mid u_k(t) \text { is adapted to } \mathcal{F}_t \text { and } \mathbb{E} \int_0^T\left|u_k(t)\right|^2 d t<\infty\bigg\}$ and $\epsilon=O(\frac{1}{\sqrt{N}})$.

We can easily verify $\widetilde{M}(\cdot)+\bar{\Pi}(\cdot)=\bar{M}(\cdot)$. Therefore, we have $\widetilde{u}^*_k(\cdot)=\bar{u}^*_k(\cdot)$. That is to say, the control gains of (\ref{optimal social limit strategy}) and (\ref{optimal social limit strategy LiMZ}) coincide for the infinite population case. 
\begin{Remark}
	The set of decentralized strategies (\ref{optimal social limit strategy}) is an exact Nash equilibrium with respect to $\mathcal{U}_K^{ad}, K=1,\cdots,N$. It is applicable for arbitrary number of players. In contrast, the set of decentralized strategies (\ref{optimal social limit strategy LiMZ}) is an asymptotic Nash equilibrium with respect to $\mathcal{U}_k^c, k=1,\cdots,N$. It is only applicable for the
	large-population case.
\end{Remark}

\section{Numerical examples}

In this section, we give a numerical example with certain particular coefficients to simulate our theoretical results. We set the number of agents to 30, i.e., $N=30$ and the terminal time is $1$. The simulation parameters are given as follows: $A=0.1,B=2,C=1,Q=1,R=5,H=1,G=2,\Gamma_1=0.5,\eta_1=1,\Gamma_0=0.5,\eta_0=1$. And for $i=1,\cdots,N$, $\xi_i=W_i(T)$. By the  Euler's method, we plot the solution curves of Riccati equations (\ref{Sigma equation}), (\ref{K equation}), (\ref{Pi equation}) and (\ref{M equation})
in Figure \ref{fig:Riccati}. By the Monte Carlo method, the figures of $\zeta^*_i(\cdot)$ and optimal state $x^*_i(\cdot)$ are shown in Figures \ref{fig:zeta^*_i} and \ref{fig:x^*_i}. Further, we also generate the dynamic simulation of optimal control $u^*_i(\cdot)$, shown in Figure \ref{fig:u^*_i}.

\begin{figure}[htbp] 
	\centering  
	\begin{minipage}{0.55\textwidth} 
		\centering  
		\includegraphics[width=\textwidth]{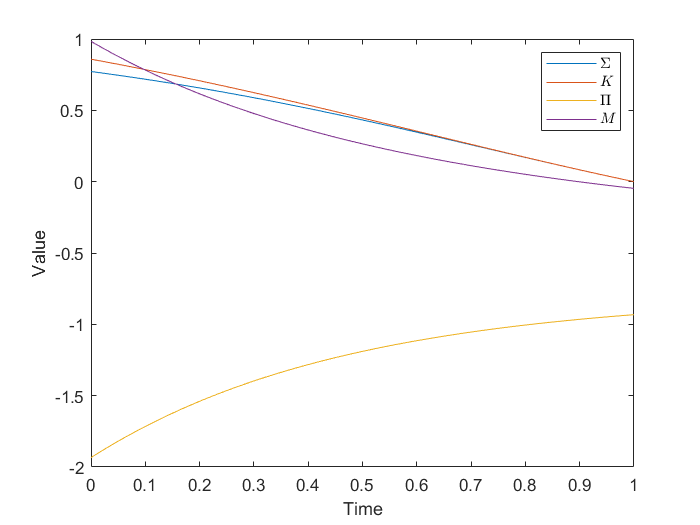} 
		\caption{\centering The solution curve of $\Sigma(\cdot)$, $K(\cdot)$, $\Pi(\cdot)$ and $M(\cdot)$} 
		\label{fig:Riccati} 
	\end{minipage}  
	\hspace{0.05\textwidth} 
	\begin{minipage}{0.55\textwidth}  
		\centering  
		\includegraphics[width=\textwidth]{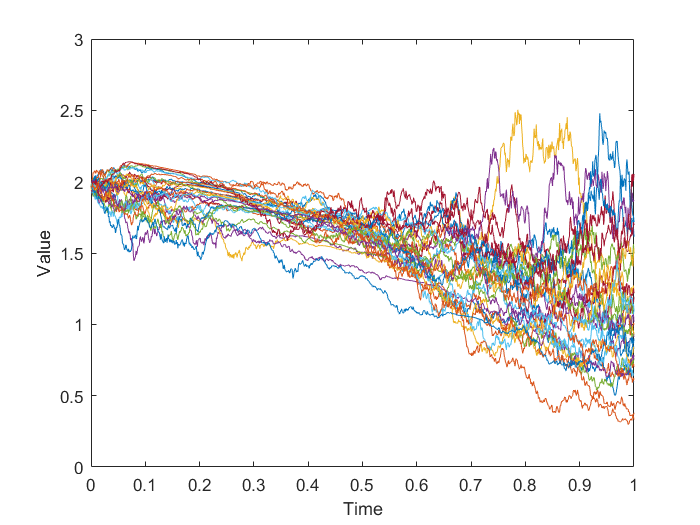} 
		
		\caption{\centering The solution curve of $\zeta^*_i(\cdot)$, $i=1,\cdots,30$} 
		\label{fig:zeta^*_i} 
	\end{minipage}  
\end{figure}  

\begin{figure}[htbp] 
	\centering  
	\begin{minipage}{0.55\textwidth} 
		\centering  
		\includegraphics[width=\textwidth]{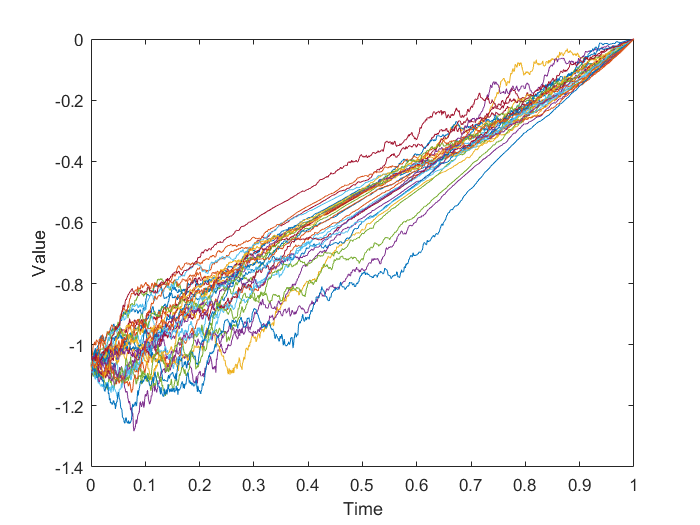} 
		\caption{\centering The solution curve of $x^*_i(\cdot)$, $i=1,\cdots,30$}  
		\label{fig:x^*_i} 
	\end{minipage}  
	\hspace{0.05\textwidth} 
	\begin{minipage}{0.55\textwidth}  
		\centering  
		\includegraphics[width=\textwidth]{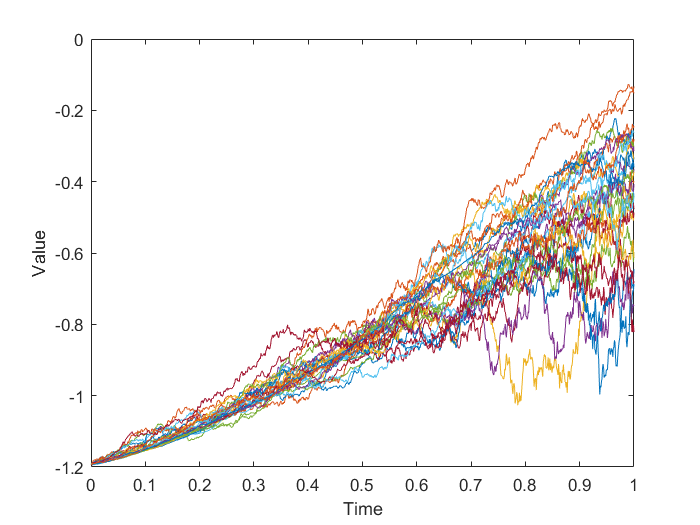} 
		
		\caption{\centering The solution curve of $u^*_i(\cdot)$, $i=1,\cdots,30$}  
		\label{fig:u^*_i}   
	\end{minipage}  
\end{figure}  

\section{Conclusion}

In this paper, we have studied an LQ mean field games and teams problems with linear backward stochastic differential equation. We adopts the backward separation approach (\cite{Wang-Wu-Xiong-2018}) to solve the problems and obtain the decentralized optimal strategy. Our present work suggests various future research directions. For example, (i) To study the backward MFG with indefinite control weight and this will formulate the mean-variance analysis with relative performance in our setting;
(ii) To study the backward MFG with integral-quadratic constraint, we can attempt to adopt the method of Lagrange multipliers and the Ekeland variational principle; (iii) To consider the direct method to solve mean field problem with the state equation contains state average term.  
We plan to study these issues in our future works.

\end{document}